\newtheorem{thm}{Theorem}[section]
\newtheorem{cor}{Corollary}[section]
\newtheorem{lem}{Lemma}[section]
\newtheorem{conj}{Conjecture}[section]
\theoremstyle{defn}
\theoremstyle{prb}
\newtheorem{prb}{Problem}[section]
\numberwithin{equation}{section}
\begin{document}

\begin{frontmatter}
	
\title{Non-divisibility of LCM Matrices by GCD Matrices on GCD-closed Sets}

\author[gazi]{Ercan Alt\i n\i \c{s}\i k \corref{corresponding}}
	\ead{ealtinisik@gazi.edu.tr}
\author[gazi]{Mehmet Y\i ld\i z }
	\ead{yildizm78@mynet.com}
\author[gazi]{Ali Keskin }
	\ead{akeskin1729@gmail.com}

\cortext[corresponding]{Corresponding author. Tel.: +90 312 202 1070}
\address[gazi]{Department of Mathematics, Faculty of Sciences, Gazi University \\ 06500 Teknikokullar - Ankara, Turkey}

\begin{abstract}
In this paper, we consider the divisibility problem of LCM matrices by GCD matrices in the ring $M_n(\mathbb{Z})$ proposed by Hong in 2002 and in particular a conjecture concerning the divisibility problem raised by Zhao in 2014. We present some certain gcd-closed sets on which the LCM matrix is not divisible by the GCD matrix in the ring $M_n(\mathbb{Z})$. This could be the first theoretical evidence that Zhao's conjecture might be true. Furthermore, we give the necessary and sufficient conditions on the gcd-closed set $S$ with $|S|\leq 8$ such that the GCD matrix divides the LCM matrix in the ring $M_n(\mathbb{Z})$ and hence we partially solve Hong's problem. Finally, we conclude with a new conjecture that can be thought as a generalization of Zhao's conjecture.
\end{abstract}

\begin{keyword}
	GCD matrix \sep LCM matrix \sep divisibility \sep greatest-type divisor \sep divisor chain \sep M\"{o}bius function  
	\MSC[2010] 11C20, 11A25, 15B36 
\end{keyword}

\end{frontmatter}

\section{Introduction}

Let $S=\{x_1,x_2,\ldots,x_n\}$ be a set of distinct positive integers and $f$ be an arithmetical function. We denote by $(f(S))$ and $(f[S])$ the $n \times n$ matrices on $S$ having $f$ evaluated at the greatest common divisor $(x_i,x_j)$ and the least common multiple $[x_i,x_j]$ of $x_i$ and $x_j$ as their $ij-$entries, respectively. If $f=I$, the identity function, the matrix $(I(S))$ is called the GCD matrix on $S$ and denoted by $(S)$. The LCM matrix $[S]$ is defined similarly. Given any positive real number $e$, let $\xi_e$ be the $e$-th power function. If $f=\xi_e$, then the matrices $(\xi_e(S))$ and $(\xi_e[S])$ are called the power GCD matrix and the power LCM matrix and we simply denote them by $(S^e)$ and $[S^e]$, respectively. In 1876, Smith \cite{Smith} proved that if $S=\left\lbrace 1,2,\ldots, n\right\rbrace $, then
$\det(S)=\prod_{k=1}^n (f\ast \mu)(k)$, where $f\ast \mu$ is the Dirichlet convolution of $f$ and the M\"{o}bius function $\mu$. Since then, many results on these matrices have been published in the
literature. For general accounts see e.g. \cite{Altinisik2005,AltinisikBuyukkose2015,BeslinLigh1989,BourqueLigh1992,HaukWangSillanpaa1997,HongLoewy2004,KorkeeHauk2003,Mattila2014,MattilaHauk2014}.

Let $A$ and $B$ be in $M_n(\mathbb{Z})$. We say that $A$ divides $B$ or $B$ is divisible by $A$ in the ring $M_n(\mathbb{Z})$ if there exists a matrix $C$ in $M_n(\mathbb{Z})$ such that $B=AC$ or $B=CA$, equivalently, $A^{-1}B \in M_n(\mathbb{Z})$ or $BA^{-1} \in M_n(\mathbb{Z})$. We simply write $A\mid B$ if $A$ divides $B$ in the ring $M_n(\mathbb{Z})$ and $A \nmid B$ otherwise. Divisibility is an interesting topic in the study of GCD and LCM matrices and the first result on the subject belongs to Bourque and Ligh. In 1992, they \cite{BourqueLigh1992} showed that if $S=\{x_1,x_2,\ldots,x_n\}$ is factor closed then $(S) \mid [S]$. A set $S$ is factor closed if it contains all divisors of $x$ for any $x$ in $S$. Then, in \cite{BourqueLigh1995}, they also proved that if $S$ is factor closed, $f$ is multiplicative and $(f\ast \mu)(x_i)\neq 0$ for all $x_i \in S$ then $(f(S)) \mid (f[S])$. A set $S$ is said to be gcd-closed if $(x_i,x_j)$ is in $S$ for all $1 \leq i,j \leq n$. Hong \cite{Hong2002} showed that for any  gcd-closed set $S$ with $|S| \leq 3$, $(S)\mid [S]$; however, for any integer $n \geq 4$, there is a gcd-closed set $S$ with $|S|=n$ such that $(S) \nmid [S]$. Along with the aforementioned results, Hong raised the following open problem in the same paper. 
\begin{prb}[\cite{Hong2002}] \label{HongProblem}
Let $n \geq 4$. Find necessary and sufficient conditions on the gcd-closed set $S$ with $|S|=n$ such that $(S) \mid [S]$.    
\end{prb}
Problem~\ref{HongProblem} was solved in particular cases $n=4$ and $n=5$ by Zhao \cite{Zhao2008} and Zhao-Zhao \cite{ZhaoZhao2009}, respectively. Providing a complete solution of Problem~\ref{HongProblem} is a hard task because there is no general method to construct all possible gcd-closed sets with $n$-elements. In \cite{Hong2002}, Hong used greatest-type divisors of the elements in $S$ to overcome this difficulty. Actually, the concept of greatest-type divisor was introduced by Hong in \cite{Hong1999JAlgebra} to prove the Bourque-Ligh conjecture \cite{BourqueLigh1992}. For $x,y\in S$ and $x<y$, if $x \mid y $ and the conditions $ x \mid z \mid y $ and $z \in S$ imply that $z \in \{ x,y \}$, then we say that $x$ is a greatest-type divisor of $y$ in $S$. For $x \in S$, we  denote by $G_S(x)$ the set of all greatest-type divisors of $x$ in $S$. In this frame, in \cite{Hong2006}, Hong conjectured that if $S$ is a gcd-closed set with $ \max_{x\in S} \{|G_S(x)|\}=1$, then $(S) \mid [S]$. Hong, Zhao and Yin \cite{HongZhaoYin2008ActaArith} proved Hong's conjecture and hence they solved Problem~\ref{HongProblem} for the particular case $ \max_{x\in S} \{|G_S(x)|\}=1$. Then, in \cite{FengHongZhao2009}, Feng, Hong and Zhao introduced a new method to investigate Problem~\ref{HongProblem} for the case $ \max_{x\in S} \{|G_S(x)|\}\leq 2$. They gave a new and elegant proof of Hong's conjecture. Let $e$ be a positive integer. Indeed, they proved that if $S$ is a gcd-closed set satisfying $ \max_{x\in S} \{|G_S(x)|\} \leq 2$, then  $(S^e) \mid [S^e]$ if and only if $ \max_{x\in S} \{|G_S(x)|\} =1$ or $ \max_{x\in S} \{|G_S(x)|\} = 2$ with $S$ satisfying the condition $\mathfrak{C}$. We say that an element $x\in S$ with $|G_S(x)|=2$ satisfies the condition $\mathfrak{C}$ if $[y_1,y_2]=x$ and $(y_1,y_2) \in G_S(y_1) \cap G_S(y_2)$, where $G_S(x)=\{y_1,y_2\}$.  We say that the set $S$ satisfies the condition $\mathfrak{C}$ if each element $x\in S$ with $|G_S(x)|=2$ satisfies the condition $\mathfrak{C}$. 

In addition to the aforementioned results, in \cite{HaukkanenKorkee2005}, Haukkanen and Korkee  investigated the divisibility of unitary analogues of GCD and LCM matrices in the ring $M_n(\mathbb{Z})$ and also, in \cite{KorkeeHaukkanen2008}, they considered Problem~\ref{HongProblem} for meet and join matrices when $n\leq 5$. On the other hand, Hong \cite{Hong2003} proved that $(f(S)) \mid (f[S])$ if $f$ is completely multiplicative and $S$ is a divisor chain or a multiple closed set, namely we have $y\in S$ if $ x \mid y \mid \text{lcm}(S)$ for any $x\in S$, where $\text{lcm}(S)$ denotes the least common multiple of all the elements in $S$.  Moreover, in a different point of view, many results on the divisibility of GCD and LCM matrices defined on particular sets have been published in the literature, see e.g. \cite{HeZhao2005,Hong2008,LiTan2011,LinTan2012,Tan2009,Tan2010,TanLi2013,TanLin2010,TanLinLiu2011,TanLuoLin2013,XuLi2011,ZhaoHongLiaoShum2007}. 

Recently, in \cite{Zhao2014}, Zhao solved Problem~\ref{HongProblem} when $5 \leq |S| \leq 7$. Indeed, he proved that $(S^e) \mid [S^e]$ if and only if $ \max_{x\in S} \{|G_S(x)|\} =1$, or $ \max_{x\in S} \{|G_S(x)|\} =2$ and $S$ satisfies the condition $\mathfrak{C}$. Thus, Problem~\ref{HongProblem} was solved for the case $|S| \leq 7$. In the same paper, Zhao raised the following conjecture.

\begin{conj}\label{Zhaoconj}
Let $S=\{x_1,x_2,\ldots x_n\}$ be a gcd-closed set with \newline $ \max_{x\in S} \{|G_S(x)|\} =m \geq 4$. If $n < \binom{m}{2} +m+2$ then $(S^e)\nmid [S^e]$. 
\end{conj}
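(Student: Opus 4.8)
The plan is to fix an element $x^\ast\in S$ attaining $|G_S(x^\ast)|=m\geq 4$, write $G_S(x^\ast)=\{y_1,\ldots,y_m\}$, and reduce the divisibility question to a local integrality condition attached to $x^\ast$. First I would record the standard Hong-type factorization of the power GCD matrix on a gcd-closed set: ordering $S$ by a linear extension of divisibility, one has $(S^e)=C\,\Delta\,C^{T}$ with $C$ a unit (hence unimodular, integral) triangular matrix and $\Delta=\mathrm{diag}(\lambda_1,\ldots,\lambda_n)$, where $\lambda_k=\sum_{d}\alpha_e(d)$, $\alpha_e=\xi_e\ast\mu$, and $d$ runs over the divisors of $x_k$ dividing no smaller element of $S$. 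Because every proper divisor of $x^\ast$ in $S$ lies under some greatest-type divisor $y_i$, the divisors of $x^\ast$ dividing a smaller element of $S$ are exactly those dividing some $y_i$; an inclusion--exclusion over the sets $\{d:d\mid y_i\}$ then collapses, via $\sum_{d\mid N}\alpha_e(d)=N^e$, to the closed form
\[
\lambda_{x^\ast}=\sum_{T\subseteq\{1,\ldots,m\}}(-1)^{|T|}\,g_T^{\,e},\qquad g_\varnothing=x^\ast,\quad g_T=\gcd\{y_i:i\in T\}.
\]
Since $C$ is unimodular, the only denominators in $(S^e)^{-1}=C^{-T}\Delta^{-1}C^{-1}$ come from the $\lambda_k$; concretely, $(S^e)\mid[S^e]$ holds if and only if $\det(S^e)=\prod_k\lambda_k$ divides every entry of $\mathrm{adj}(S^e)\,[S^e]$, which localises the obstruction to the $p$-adic behaviour of the $\lambda_k$ against the adjugate.

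The combinatorial heart is a pigeonhole count. Gcd-closedness forces $x^\ast$, the mutually incomparable greatest-type divisors $y_1,\ldots,y_m$, every pairwise meet $g_{ij}=(y_i,y_j)$ and the global meet $d_0=\gcd(y_1,\ldots,y_m)$ to lie in $S$. The $m+1$ elements $\{x^\ast,y_1,\ldots,y_m\}$ are distinct, and each $g_{ij}$ and $d_0$ is a strict divisor of some $y_i$, hence disjoint from this list. If the $\binom{m}{2}$ meets $g_{ij}$ together with $d_0$ were pairwise distinct, we would already have $|S|\geq\binom{m}{2}+m+2$. Thus the hypothesis $n<\binom{m}{2}+m+2$ forces a collision in the lower part of the meet-semilattice: either two pairwise meets coincide, $g_{ij}=g_{kl}$, or the global meet equals a pairwise one, $d_0=g_{ij}$. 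The conjecture therefore reduces to the following \emph{key lemma}: if $|G_S(x^\ast)|=m\geq 4$ and the pairwise meets of $G_S(x^\ast)$ together with their global meet take at most $\binom{m}{2}$ distinct values, then $(S^e)\nmid[S^e]$.

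To prove the key lemma I would choose a prime $p$ with $v_p(\lambda_{x^\ast})>0$ and compare $v_p\big(\det(S^e)\big)$ with $v_p\big((\mathrm{adj}(S^e)\,[S^e])_{ab}\big)$ for indices $a,b$ running over the rows and columns associated with $x^\ast$ and the $y_i$. A collision merges two equal terms $g_T^{\,e}$ in the inclusion--exclusion for $\lambda_{x^\ast}$ (and in the analogous expansions governing the adjugate), perturbing the $p$-adic valuation of $\lambda_{x^\ast}$ with no matching perturbation on the LCM side; the engine here is the identity $[x,y]^e=x^ey^e/(x,y)^e$ feeding the reciprocal structure $[S^e]=\mathrm{diag}(x_i^e)\,\big((x_i,x_j)^{-e}\big)\,\mathrm{diag}(x_i^e)$. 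One should then be able to show that for $m\geq 4$ this mismatch is unavoidable, so that $\det(S^e)\nmid(\mathrm{adj}(S^e)\,[S^e])_{ab}$ for a suitable $(a,b)$ -- exactly the generalisation of the failure of condition $\mathfrak{C}$ in the case $m=2$.

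The main obstacle is precisely this last step, and it is genuinely $m$-sensitive. For $m\leq 2$ a collision (indeed $d_0=g_{12}$ is automatic) is compatible with divisibility under condition $\mathfrak{C}$, so any argument that is too crude -- for instance one merely invoking the necessary condition $\det(S^e)\mid\det[S^e]$ -- cannot work; the proof must exploit the extra room present only when $m\geq 4$, where the number $\binom{m}{2}$ of pairwise meets outstrips what a collapsed lower lattice can support. Consequently the difficulty is to control the many distinct collision patterns (two coincident pairwise meets, a meet coinciding with $d_0$, partial chains among the $y_i$) uniformly in $e$ and in the ambient arithmetic of the $y_i$, and to verify that in each pattern the $p$-adic imbalance survives the cancellation hidden in the adjugate. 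Establishing a single robust valuation inequality covering all these patterns -- rather than a case analysis that degrades as $m$ grows -- is where I expect the real work to lie.
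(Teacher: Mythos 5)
The statement you set out to prove is Conjecture~1.1 (Zhao's conjecture), which is \emph{open}: the paper does not prove it, and explicitly positions its results only as ``the first theoretical evidence that Zhao's conjecture might be true'' (Theorems~3.1--3.5 establish non-divisibility in the special configurations where $D_S(x)$ is a divisor chain or $|D_S(x)|\leq 3$, plus the $|S|=8$, $|G_S(x_8)|=3$, $|D_S(x_8)|=4$ case), before closing with a further unproven generalization (Conjecture~4.1). Your proposal, by your own admission in the final paragraph, is also not a proof. The parts you do carry out are sound and closely parallel the paper: your closed form for $\lambda_{x^\ast}$ is exactly Hong's formula for $\alpha_{e,k}$ (Lemma~2.2), the unimodular factorization and the localization of denominators to the $\alpha_{e,k}$ are standard, and your pigeonhole count of $x^\ast$, the $m$ mutually incomparable greatest-type divisors, the $\binom{m}{2}$ pairwise meets and the global meet (all forced into $S$ by gcd-closedness) reappears in the paper's Section~4, where the authors extract the sharper consequence that Zhao's hypothesis forces three greatest-type divisors with equal pairwise gcds and hence, via $\bigl(y_3,[y_1,y_2]\bigr)=\bigl[(y_1,y_3),(y_2,y_3)\bigr]$, failure of condition $\mathfrak{M}$. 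But note your single-collision conclusion (two meets coincide, or one equals $d_0$) is weaker than that triple-coincidence claim, and in any case all of the actual non-divisibility content of your argument is deferred to the ``key lemma,'' which you leave open.

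That deferral is a genuine gap, not a routine verification, for three concrete reasons. First, the $p$-adic mechanism is not even heuristically forced: two coincident terms $g_T^{\,e}$ in the alternating sum for $\lambda_{x^\ast}$ merge only when the parities of $|T|$ agree and \emph{cancel} otherwise, you have no lower bound on $v_p(\lambda_{x^\ast})$, no candidate prime $p$, no candidate entry $(a,b)$, and no control over the massive signed cancellation inside $\operatorname{adj}(S^e)\,[S^e]$. Second, as you observe, a collision is compatible with divisibility at $m=2$ (condition $\mathfrak{C}$), and the paper's Theorem~3.5 shows that at $m=3$, $n=8<\binom{3}{2}+3+2$, divisibility can hold under condition $\mathfrak{M}$; so any valuation inequality must consume the hypothesis $m\geq 4$ quantitatively, and nothing in your outline does so --- your key lemma is essentially a cousin of the paper's own Conjecture~4.1, itself unproven. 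Third, it is worth knowing that the paper's successful non-divisibility proofs use a completely different, archimedean engine: via the Bourque--Ligh inverse (Lemma~2.1) and explicit evaluations of the $c_{ij}$ (Lemmas~2.4--2.8, together with the row-summation Lemma~3.1), they trap a single entry, or partial sum of entries, of $U=[S](S)^{-1}$ strictly inside $(0,1)$. That size argument is precisely what currently works only when $D_S(x)$ is a chain or very small --- which is why the conjecture remains open and why your plan, whose scaffolding is correct but whose load-bearing step is missing, does not close it either.
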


Organization of the paper is as follows. In Section~2, we present some well-known lemmas such as Lemmas~\ref{BourqueLighLemma}, \ref{honglemma}, and \ref{FHZLEMMA} and some novel lemmas which concern the inverse of the GCD matrix on gcd-closed sets  and are important tools in the proof of our main results. In Section~3, firstly we give some results, in which we find some certain gcd-closed sets on which $(S)$ does not divide $[S]$. Secondly, using these results, which support the truth of Conjecture~\ref{Zhaoconj}, we give the necessary and sufficient conditions on the gcd-closed set $S$ with $|S|\leq 8$ such that $(S) \mid [S]$ in the ring $M_n(\mathbb{Z})$, and hence a particular solution to Problem~\ref{HongProblem} when $|S|\leq 8$. In the last section, we present a new conjecture that can be thought as a generalization of  Conjecture~\ref{Zhaoconj}.  

\section{Preliminaries}
We begin with a result of Bourque and Ligh \cite{BourqueLigh1992} providing a formula for the entries of the inverse of $(S^e)$ when $S$ is gcd-closed. Throughout this section, we always assume that $S=\{x_1,x_2,\ldots,x_n\}$ and $S$ is gcd-closed.
\begin{lem} [\cite{BourqueLigh1993}]	\label{BourqueLighLemma}
The inverse of the power GCD matrix $(S^e)$ on $S$ is the matrix $W=(w_{ij})$, where 
	$$w_{ij}=\sum_{x_i\mid x_k \atop x_j\mid x_k}\frac{c_{ik} c_{jk}}{\alpha_{e,k}}$$
	with 
	\begin{equation} \label{cij1}
	c_{ij}=\sum_{dx_i\mid x_j \atop dx_i\nmid x_t,x_t<x_j}^{}{\mu(d)}
	\end{equation}
	and
	\begin{equation}\label{alphaek}
	\alpha_{e,k}=\sum_{d\mid x_k \atop d\nmid x_t,x_t<x_k} (\xi_e* \mu )(d)
	\end{equation}  and $\xi_e(x)=x^e$.
\end{lem}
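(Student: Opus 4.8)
The plan is to establish an explicit factorization of the form $(S^e)=E\,\Delta\,E^{T}$ with $E$ lower triangular and $\Delta$ diagonal, and then to invert it by elementary linear algebra. First I would recall the M\"{o}bius-inversion identity $m^{e}=\sum_{d\mid m}(\xi_e*\mu)(d)$, which gives
\[
(x_i,x_j)^{e}=\sum_{d\mid(x_i,x_j)}(\xi_e*\mu)(d)=\sum_{d\mid x_i,\ d\mid x_j}(\xi_e*\mu)(d).
\]
The goal is to reorganize this sum so that each divisor $d$ is attached to a single element of $S$, namely the least element of $S$ that $d$ divides.

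The crucial step, and the only place where gcd-closedness is genuinely used, is the following grouping observation: if $x_k$ denotes the least element of $S$ divisible by $d$, then $x_k\mid x_m$ for every $x_m\in S$ with $d\mid x_m$. Indeed $(x_k,x_m)\in S$ divides $x_k$, is divisible by $d$, and hence cannot be strictly smaller than $x_k$; therefore $(x_k,x_m)=x_k$, i.e. $x_k\mid x_m$. Partitioning the divisors $d$ of $(x_i,x_j)$ according to the least element of $S$ they divide and applying this observation, I would obtain
\[
(x_i,x_j)^{e}=\sum_{x_k\mid x_i,\ x_k\mid x_j}\alpha_{e,k},
\]
with $\alpha_{e,k}$ exactly as in \eqref{alphaek}. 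In matrix language this is $(S^e)=E\,\Delta\,E^{T}$, where $e_{ik}=1$ if $x_k\mid x_i$ and $0$ otherwise, and $\Delta=\mathrm{diag}(\alpha_{e,1},\dots,\alpha_{e,n})$.

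Ordering $S$ so that $x_k\mid x_i$ forces $k\le i$ makes $E$ lower triangular with unit diagonal, hence invertible, and $(S^e)^{-1}=(E^{-1})^{T}\Delta^{-1}E^{-1}$. Setting $C=(E^{-1})^{T}$ and expanding this product gives $w_{ij}=\sum_k c_{ik}c_{jk}/\alpha_{e,k}$, where a term survives only when $c_{ik}$ and $c_{jk}$ are both nonzero, that is, when $x_i\mid x_k$ and $x_j\mid x_k$; this is the asserted shape of $w_{ij}$. It then remains to identify the entries of $C$ with \eqref{cij1}. For this I would verify directly that the matrix $C$ built from \eqref{cij1} satisfies $E\,C^{T}=I$: the required identity $\sum_{x_k\mid x_i}c_{jk}=\delta_{ij}$ collapses, after the same least-element grouping applied to $d\,x_j$, to $\sum_{d:\,dx_j\mid x_i}\mu(d)=\sum_{d\mid x_i/x_j}\mu(d)$, which is $1$ when $x_i=x_j$ and $0$ otherwise.

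I expect the grouping observation, and its reuse in the final step, to be the main obstacle; everything else is formal bookkeeping. The subtlety is that $S$ need not be factor closed, so one cannot simply index the sums by the quotients $x_i/x_k$ and invoke the classical M\"{o}bius function of the divisor lattice. Gcd-closedness is exactly the hypothesis that makes ``the least element of $S$ divisible by $d$'' behave like a genuine meet, and both the factorization $(S^e)=E\Delta E^{T}$ and the formula \eqref{cij1} for the entries of $E^{-1}$ rest on this single fact.
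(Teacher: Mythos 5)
Your argument is correct, and it is essentially the standard proof: the paper itself imports this lemma from Bourque and Ligh without proof, and their derivation is precisely your factorization $(S^e)=E\Delta E^{T}$ (via the least-element grouping, which is exactly where gcd-closedness enters) followed by the verification that the matrix with entries \eqref{cij1} inverts $E^{T}$. The only detail you leave implicit is the invertibility of $\Delta$: for $e>0$ one has $(\xi_e*\mu)(d)=d^{e}\prod_{p\mid d}\left(1-p^{-e}\right)>0$ for every $d$, and $d=x_k$ always occurs in the sum \eqref{alphaek}, so $\alpha_{e,k}>0$; this one line is needed both to write $\Delta^{-1}$ and to justify speaking of \emph{the} inverse of $(S^e)$ at all.
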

The following lemma, which was presented by Hong \cite{Hong2004JAlgebra}, provides a simple way to calculate $\alpha_{e,k}$, and the proof follows from the inclusion-exclusion principle. 

\begin{lem}[\cite{Hong2004JAlgebra}]\label{honglemma}
Let $G_S(x_k)=\{y_{k,1},\ldots,y_{k,m}\}$ be the set of the greatest type divisors of $x_k$ in $S$ $(1\leq k\leq n)$. Then 
	\begin{equation} \label{alphaekHong}
	\alpha _{e,k}=x_k^e+\sum_{t=1}^{m} (-1)^t \sum_{1\leq i_1< \cdots<i_t\leq m}^{}(x_k,y_{k,i_1},\ldots,y_{k,i_t})^e
	\end{equation} with $\alpha _{e,k}$ defined as in (\ref{alphaek}).
\end{lem}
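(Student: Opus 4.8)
The plan is to rewrite the defining sum (\ref{alphaek}) as a sum over those divisors of $x_k$ that divide \emph{no} greatest-type divisor, and then to resolve that restriction by inclusion-exclusion. The only arithmetic input needed is the identity
\begin{equation*}
\sum_{d\mid n}(\xi_e*\mu)(d)=n^e,
\end{equation*}
which is immediate from M\"obius inversion, since the divisor sum of an arithmetical function is its Dirichlet convolution with the constant-one function and $(\xi_e*\mu)*\mathbf{1}=\xi_e$.

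The key structural step --- and the one I expect to be the only real obstacle --- is to pin down exactly which divisors $d\mid x_k$ are killed by the condition ``$d\nmid x_t$ for every $x_t\in S$ with $x_t<x_k$''. Here gcd-closedness is decisive. If $d\mid x_k$ and $d\mid x_t$ for some $x_t\in S$ with $x_t<x_k$, then $d\mid(x_k,x_t)$, and $(x_k,x_t)\in S$ is a proper divisor of $x_k$ belonging to $S$. Moreover, every proper divisor $z$ of $x_k$ lying in $S$ divides some greatest-type divisor of $x_k$: the elements $w\in S$ with $z\mid w\mid x_k$ and $w\neq x_k$ form a nonempty finite set, and any divisibility-maximal such $w$ is, by definition, a greatest-type divisor of $x_k$ that $z$ divides. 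Combining these observations with the trivial converse (each $y_{k,i}\in S$ satisfies $y_{k,i}<x_k$), a divisor $d$ of $x_k$ violates the condition in (\ref{alphaek}) if and only if $d\mid y_{k,i}$ for some $i$, so that
\begin{equation*}
\alpha_{e,k}=\sum_{\substack{d\mid x_k\\ d\nmid y_{k,i},\,1\le i\le m}}(\xi_e*\mu)(d).
\end{equation*}

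Finally I would apply inclusion-exclusion to the sets $A_i=\{d:d\mid y_{k,i}\}$ inside the divisors of $x_k$. Writing $g=\xi_e*\mu$ and noting that $\bigcap_{i\in I}A_i$ is exactly the set of common divisors of $\{y_{k,i}\}_{i\in I}$, i.e. the divisors of their greatest common divisor, one obtains
\begin{equation*}
\alpha_{e,k}=\sum_{d\mid x_k}g(d)-\sum_{\emptyset\neq I\subseteq\{1,\ldots,m\}}(-1)^{|I|+1}\sum_{d\mid\gcd_{i\in I}y_{k,i}}g(d).
\end{equation*}
Evaluating every inner sum through the identity above replaces $\sum_{d\mid x_k}g(d)$ by $x_k^e$ and each $\sum_{d\mid\gcd_{i\in I}y_{k,i}}g(d)$ by $\bigl(\gcd_{i\in I}y_{k,i}\bigr)^e$. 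Since every $y_{k,i}$ divides $x_k$, that last quantity is precisely $(x_k,y_{k,i_1},\ldots,y_{k,i_t})^e$ for $I=\{i_1<\cdots<i_t\}$. Grouping the subsets $I$ by their size $t=|I|$ and using $-(-1)^{t+1}=(-1)^t$ then collapses the expression into exactly (\ref{alphaekHong}).
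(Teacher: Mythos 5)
Your proof is correct and takes the same route the paper itself indicates: the paper states this lemma as a quoted result of Hong \cite{Hong2004JAlgebra} whose ``proof follows from the inclusion-exclusion principle,'' and your argument is precisely that proof carried out in full. In particular, your two key steps are exactly the needed ones --- using gcd-closedness (via $d\mid(x_k,x_t)\in S$ and the fact that every proper divisor of $x_k$ in $S$ divides a maximal, hence greatest-type, divisor) to show the condition $d\nmid x_t$ for all $x_t<x_k$ is equivalent to $d\nmid y_{k,i}$ for all $i$, and then applying inclusion-exclusion together with the M\"obius-inversion identity $\sum_{d\mid n}(\xi_e*\mu)(d)=n^e$ to each intersection $\{d : d\mid (x_k,y_{k,i_1},\ldots,y_{k,i_t})\}$.
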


Similarly, using the inclusion-exclusion principle, we obtain the following lemma for the values of $c_{ij}$ and $\alpha _{1,j}$. 

\begin{lem} \label{Lemmacijalphak} Let $G_S(x_j)=\{y_{j,1},\ldots,y_{j,m}\}$ be the set of the greatest type divisors of $x_j$ in $S$ $(1\leq j\leq n)$. Then
\begin{equation} \label{cij2}
	c_{ij}=\sum_{d\mid \frac{x_j}{x_i}} \mu (d)+\sum_{r=1}^{m}(-1)^r\sum_{1\leq i_1<\ldots <i_r\leq m}\sum_{d\mid \frac{(y_{j,i_1},\ldots ,y_{j,i_r})}{x_i}}\mu (d)
	\end{equation}
	and 
	\begin{equation} \label{alphajbycij}
	\alpha_j:=\alpha_{1,j}=\sum_{x_i\mid x_j} x_i c_{ij}.
	\end{equation}
\end{lem}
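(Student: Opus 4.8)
The plan is to derive both formulas directly from the definitions in Lemma~\ref{BourqueLighLemma} via the inclusion-exclusion principle, exactly as suggested by the sentence preceding the statement. Consider first the formula \eqref{cij2} for $c_{ij}$. By definition \eqref{cij1}, the sum defining $c_{ij}$ runs over those $d$ with $dx_i \mid x_j$ but $dx_i \nmid x_t$ for every $x_t \in S$ with $x_t < x_j$. I would reinterpret this constraint: the condition $dx_i \mid x_j$ together with the requirement that $dx_i$ divide no proper smaller element of $S$ is precisely the statement that $d x_i$ is a multiple of $x_i$ dividing $x_j$ that avoids every greatest-type divisor $y_{j,1},\ldots,y_{j,m}$ of $x_j$. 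In other words, summing $\mu(d)$ over all $d$ with $dx_i\mid x_j$ (which gives the first term $\sum_{d\mid x_j/x_i}\mu(d)$) overcounts, and one must subtract off those $d$ for which $dx_i$ divides some $y_{j,i}$; applying inclusion-exclusion over the family of greatest-type divisors and using that $dx_i$ divides $y_{j,i_1},\ldots,y_{j,i_r}$ simultaneously iff $dx_i \mid (y_{j,i_1},\ldots,y_{j,i_r})$ yields exactly the alternating double sum in \eqref{cij2}.

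The key point to verify carefully is that ``$dx_i \nmid x_t$ for all $x_t < x_j$'' can be replaced by ``$dx_i \nmid y_{j,i}$ for all greatest-type divisors $y_{j,i}$ of $x_j$.'' This is where I expect the main (though still routine) obstacle to lie. The reduction holds because if $dx_i$ divides some proper $x_t < x_j$ with $x_t \mid x_j$, then $x_t$ lies on a divisor chain from $dx_i$ up to $x_j$, and hence $dx_i$ divides some greatest-type divisor of $x_j$ lying above $x_t$; conversely every greatest-type divisor of $x_j$ is such a proper divisor. One must also confirm that $x_t$ ranges only over divisors of $x_j$ implicitly, since the condition $dx_i \mid x_t$ combined with $dx_i \mid x_j$ and gcd-closedness forces the relevant $x_t$ to be comparable with $x_j$; gcd-closedness of $S$ guarantees the lattice structure that makes the inclusion-exclusion over $G_S(x_j)$ exhaustive and non-redundant.

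For the second formula \eqref{alphajbycij}, I would specialize Lemma~\ref{BourqueLighLemma} to $e=1$ and compare with the defining relation $(S)W = I$. Writing out the diagonal and off-diagonal identities, the quantity $\alpha_j$ defined in \eqref{alphaek} with $e=1$ can be expressed through the $c_{ij}$ by exploiting the telescoping identity $\sum_{d\mid m}(\xi_1 * \mu)(d) = m$ together with the Möbius-type relation connecting $\alpha_{1,j}$ to the coefficients $c_{ij}$. Concretely, one rearranges $\alpha_{1,j} = \sum_{d\mid x_j,\, d\nmid x_t,\,x_t<x_j}(\xi_1*\mu)(d)$ by grouping the divisors $d$ of $x_j$ according to the largest $x_i \in S$ with $x_i \mid d x_i \mid x_j$, and recognizes each inner sum as $x_i c_{ij}$. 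This gives $\alpha_j = \sum_{x_i \mid x_j} x_i c_{ij}$. Throughout, the only substantive input is the inclusion-exclusion bookkeeping and the combinatorial fact that on a gcd-closed set the greatest-type divisors correctly parametrize the ``forbidden'' multiples; the rest is direct substitution, so I would keep the exposition brief and emphasize the reduction of the divisibility constraint to the set $G_S(x_j)$.
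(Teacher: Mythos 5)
Your derivation of \eqref{cij2} is correct, and it is essentially the argument the paper leaves implicit: the paper offers no written proof of this lemma, saying only that it follows from the inclusion--exclusion principle in the same way as Lemma~\ref{honglemma}. The reduction you single out as the key point is indeed the crux, but your justification of it is misstated: from $dx_i\mid x_t$ and $dx_i\mid x_j$, gcd-closedness does \emph{not} force $x_t$ to be comparable with $x_j$; what it gives is $dx_i\mid (x_t,x_j)\in S$, and $(x_t,x_j)$ is then a proper divisor of $x_j$ lying in $S$, hence divides some member of $G_S(x_j)$. With that one-line repair, the inclusion--exclusion over $G_S(x_j)$, together with the observation that $dx_i$ divides $y_{j,i_1},\ldots,y_{j,i_r}$ simultaneously iff $dx_i\mid (y_{j,i_1},\ldots,y_{j,i_r})$, yields \eqref{cij2} exactly as you describe.

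Your sketch for \eqref{alphajbycij}, however, has a genuine gap. Write $D=dx_i$; by \eqref{cij1}, $c_{ij}$ sums $\mu(D/x_i)$ over \emph{all} admissible divisors $D$ of $x_j$ (those dividing no $x_t\in S$ with $x_t<x_j$) that are multiples of $x_i$. Hence a fixed admissible $D$ contributes to $x_ic_{ij}$ for \emph{every} $x_i\in S$ dividing $D$, so \eqref{alphajbycij} cannot arise from any partition of the admissible divisors of $x_j$ into classes indexed by single elements of $S$; your proposed grouping ``by the largest $x_i\in S$'' does not make the inner sums equal to $x_i c_{ij}$. Worse, expanding $(\xi_1\ast\mu)(D)=\sum_{e\mid D}e\,\mu(D/e)$ brings in divisors $e\notin S$, and you give no reason why their total contribution vanishes---that is precisely the substantive point here. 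A correct completion along your lines: swap the order of summation to obtain
\begin{equation*}
\alpha_{1,j}=\sum_{e\mid x_j} e\, c_{e,j}, \qquad c_{e,j}:=\sum_{de\mid x_j \atop de\nmid x_t,\ x_t<x_j}\mu(d),
\end{equation*}
where $e$ now runs over \emph{all} divisors of $x_j$, and then note that the computation proving \eqref{cij2} applies verbatim to $c_{e,j}$: each inner M\"{o}bius sum equals $1$ precisely when $e=x_j$, respectively $e=(y_{j,i_1},\ldots,y_{j,i_r})$, and all of these numbers lie in $S$ because $S$ is gcd-closed; therefore $c_{e,j}=0$ whenever $e\notin S$, and only the terms with $e=x_i\in S$ survive, which is \eqref{alphajbycij}. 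Alternatively, and closest to the paper's one-line intent, substitute \eqref{cij2} into $\sum_{x_i\mid x_j}x_ic_{ij}$: each sum $\sum_{d\mid (y_{j,i_1},\ldots ,y_{j,i_r})/x_i}\mu(d)$ collapses to the indicator of $x_i=(y_{j,i_1},\ldots,y_{j,i_r})$, so the total equals $x_j+\sum_{r=1}^{m}(-1)^r\sum_{1\leq i_1<\cdots<i_r\leq m}(y_{j,i_1},\ldots,y_{j,i_r})$, and this is $\alpha_{1,j}$ by Lemma~\ref{honglemma} with $e=1$, since $(x_j,y_{j,i_1},\ldots,y_{j,i_r})=(y_{j,i_1},\ldots,y_{j,i_r})$.
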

The values of $c_{ij}$ play an important role to determine the divisibility of LCM matrices by GCD matrices on gcd-closed sets. Therefore, we calculate the value of $c_{ij}$ in some particular cases. The first lemma belongs to Zhao \cite{Zhao2014}. 

\begin{lem}[\cite{Zhao2014}] \label{FHZLEMMA} 
If $x_i\in G_S(x_j)$ then $c_{ij}=-1$ and $c_{jj}=1$.
\end{lem}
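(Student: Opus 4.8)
The plan is to compute $c_{ij}$ directly from the closed form (\ref{cij2}) of Lemma~\ref{Lemmacijalphak}, relying throughout on the elementary Möbius identity $\sum_{d\mid n}\mu(d)=1$ if $n=1$ and $0$ otherwise. Under this identity an inner sum $\sum_{d\mid N/x_i}\mu(d)$ contributes $1$ precisely when $N=x_i$, contributes $0$ when $N/x_i$ is an integer exceeding $1$, and is read as $0$ when $x_i\nmid N$. Thus the whole computation reduces to deciding, for each subset of the greatest-type divisors $G_S(x_j)=\{y_{j,1},\ldots,y_{j,m}\}$, whether their gcd equals $x_i$.

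First I would dispose of the diagonal value. Setting $i=j$ in (\ref{cij2}), the leading term is $\sum_{d\mid x_j/x_j}\mu(d)=\mu(1)=1$. For every nonempty subset $\{y_{j,i_1},\ldots,y_{j,i_r}\}\subseteq G_S(x_j)$, each $y_{j,i_s}$ is a \emph{proper} divisor of $x_j$, so $(y_{j,i_1},\ldots,y_{j,i_r})\mid y_{j,i_1}<x_j$; in particular $x_j$ cannot divide this gcd, and every inner sum in the alternating part vanishes. Hence $c_{jj}=1+0=1$.

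Next, for $x_i\in G_S(x_j)$ I would evaluate (\ref{cij2}) with $x_i$ a proper divisor of $x_j$. The leading term is now $\sum_{d\mid x_j/x_i}\mu(d)=0$ since $x_j/x_i>1$, so everything comes from the alternating double sum. The decisive observation is that two \emph{distinct} greatest-type divisors of $x_j$ can never divide one another: if $y\mid y'$ with $y,y'\in G_S(x_j)$ and $y\ne y'$, then $y\mid y'\mid x_j$ with $y'\in S\setminus\{y,x_j\}$ (as $y'<x_j$), contradicting the definition of a greatest-type divisor of $x_j$. Consequently, for a subset containing $x_i$ the gcd divides $x_i$ but cannot equal $x_i$ once the subset has more than one element, while for a subset avoiding $x_i$ the gcd equals $x_i$ only if $x_i$ divides every member, which is again impossible. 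So the unique subset whose gcd equals $x_i$ is the singleton $\{x_i\}$, contributing $(-1)^1\sum_{d\mid x_i/x_i}\mu(d)=-1$, and all other inner sums are $0$. Therefore $c_{ij}=0+(-1)=-1$.

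I expect the only genuinely delicate point to be the gcd analysis of the last paragraph, namely verifying that no multi-element subset of $G_S(x_j)$ has gcd exactly $x_i$ and that subsets avoiding $x_i$ never produce $x_i$ either; both facts collapse to the incomparability of distinct greatest-type divisors, after which the remainder is a mechanical application of the Möbius identity. Arguing straight from the defining sum (\ref{cij1}) is also possible, but tracking which multiples $dx_i$ fail to divide the smaller elements of $S$ is more cumbersome, so I would favor the (\ref{cij2}) route.
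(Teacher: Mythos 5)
Your proof is correct, but note that the paper itself contains no proof of Lemma~\ref{FHZLEMMA}: it is imported from Zhao \cite{Zhao2014}, so there is no internal argument to compare against. Your route via (\ref{cij2}) is nevertheless exactly in the spirit of the surrounding material: the paper's proof of Lemma~\ref{cijzeroLemma} runs the identical M\"{o}bius-vanishing computation, killing the leading term by $\sum_{d\mid x_j/x_i}\mu(d)=0$ for $x_i\neq x_j$ and then deciding which subset gcds $(y_{j,i_1},\ldots,y_{j,i_r})$ can equal $x_i$; your lemma is the complementary case, and the one genuinely new ingredient you supply --- that distinct greatest-type divisors of $x_j$ are incomparable, so the singleton $\{x_i\}$ is the unique subset of $G_S(x_j)$ with gcd equal to $x_i$ --- is argued correctly from the definition. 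Your conventions also match the paper's tacit usage (e.g.\ in the proofs of Lemmas~\ref{Dempty} and \ref{toplemma}): the inner sum is empty, hence zero, when $x_i$ fails to divide the gcd. One remark on your closing comment: the direct route from (\ref{cij1}) is less cumbersome than you suggest once gcd-closedness is invoked. For $x_i\in G_S(x_j)$, if $d\mid x_j/x_i$ with $d>1$ and $dx_i\mid x_t$ for some $x_t\in S$ with $x_t<x_j$, then $dx_i$ divides $(x_t,x_j)\in S$ and $x_i\mid (x_t,x_j)\mid x_j$, forcing $(x_t,x_j)\in\{x_i,x_j\}$ by the greatest-type divisor property; the option $(x_t,x_j)=x_j$ contradicts $x_t<x_j$, so $(x_t,x_j)=x_i$ and $dx_i\mid x_i$, i.e.\ $d=1$, a contradiction. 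Hence (\ref{cij1}) sums $\mu(d)$ over all $d\mid x_j/x_i$ except $d=1$, giving $c_{ij}=0-\mu(1)=-1$ at once, while $c_{jj}=\mu(1)=1$ trivially. That argument is presumably closer to Zhao's original one, and it makes explicit the only place where gcd-closedness of $S$ enters --- in your version that dependence is hidden inside the validity of (\ref{cij2}) itself.
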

Now, we introduce a new type subset of $S$.  Let $G_S(x_k)=\{y_{k,1},\ldots,y_{k,m}\}$ for $x_k\in S$.
We define $D_S(x_k)$ as follows:
$$D_S(x_k):=\{(y_{k,i_1},\ldots,y_{k,i_r}):2\leq r\leq m \text{ and } 1\leq i_1<\cdots<i_r\leq m \}.$$ 
In other words,  $D_S(x_k)$ is the set of all possible greatest common divisors of different greatest-type divisors of $x_k$.
Moreover, we recall the set $D_r=\{x\in S: x_r\mid x \text{ and } x>x_r\}$ for $x_r$ in $S$ which was defined by Feng, Hong and Zhao in \cite{FengHongZhao2009}. We give the second lemma for the value of $c_{ij}$, which is, in fact, a generalization of Lemma~2.7 in \cite{Zhao2014}. 
\begin{lem} \label{Dempty}
If $x_i\in D_S(x_j)$ and $D_S(x_j)\cap D_i=\emptyset ,$ then $c_{ij}=l_i-1$, where $l_i=|D_i\cap G_S(x_j)|$.
	
\end{lem}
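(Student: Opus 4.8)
The plan is to start from formula~(\ref{cij2}) of Lemma~\ref{Lemmacijalphak} and evaluate each inner Möbius sum as an indicator. For a statement $P$ write $[P]=1$ if $P$ holds and $[P]=0$ otherwise, put $m=|G_S(x_j)|$, and for a nonempty $T=\{i_1,\dots,i_r\}\subseteq\{1,\dots,m\}$ abbreviate $g_T=(y_{j,i_1},\dots,y_{j,i_r})$. The first observation is that $\sum_{d\mid g_T/x_i}\mu(d)$ equals $1$ when $g_T=x_i$ and $0$ otherwise, where the sum is read as $0$ when $x_i\nmid g_T$, in accordance with the original definition~(\ref{cij1}); thus this sum equals $[\,g_T=x_i\,]$. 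The opening term $\sum_{d\mid x_j/x_i}\mu(d)$ vanishes, because $x_i\in D_S(x_j)$ is a greatest common divisor of at least two greatest-type divisors of $x_j$, each a proper divisor of $x_j$, whence $x_i<x_j$. Therefore
\begin{equation*}
c_{ij}=\sum_{r=1}^{m}(-1)^r\sum_{\substack{T\subseteq\{1,\dots,m\}\\ |T|=r}}[\,g_T=x_i\,].
\end{equation*}

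Next I would discard the terms with $|T|=1$. The key structural fact is that two distinct greatest-type divisors of $x_j$ are incomparable under divisibility: if $y_{j,a}\mid y_{j,b}$ with $a\neq b$, then $y_{j,a}\mid y_{j,b}\mid x_j$ with $y_{j,b}\in S\setminus\{y_{j,a},x_j\}$ would contradict the maximality in the definition of a greatest-type divisor. Since $x_i$ is the gcd of two distinct greatest-type divisors $y_{j,a},y_{j,b}$, if $x_i$ coincided with some $y_{j,k}$ then $y_{j,k}=x_i$ would divide both $y_{j,a}$ and $y_{j,b}$, forcing $y_{j,a}=y_{j,k}=y_{j,b}$, a contradiction. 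Hence $x_i\notin G_S(x_j)$ and every $|T|=1$ summand $[\,y_{j,k}=x_i\,]$ is zero.

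The heart of the argument is the case $|T|\geq 2$, where the hypothesis $D_S(x_j)\cap D_i=\emptyset$ enters. For such $T$ the element $g_T$ lies in $D_S(x_j)$, and since $S$ is gcd-closed, $g_T\in S$; the hypothesis says no element of $D_S(x_j)$ is a proper multiple of $x_i$, so $x_i\mid g_T$ forces $g_T=x_i$. Consequently $[\,g_T=x_i\,]=[\,x_i\mid g_T\,]=\prod_{k\in T}[\,x_i\mid y_{j,k}\,]$. Setting $A=\{k:\,x_i\mid y_{j,k}\}$, the incomparability remark again gives that $x_i\mid y_{j,k}$ forces $y_{j,k}>x_i$, i.e. $y_{j,k}\in D_i$; thus $A=\{k:\,y_{j,k}\in D_i\}$, $|A|=l_i$, and $l_i\geq 2$ since $x_i$ already divides the generating divisors $y_{j,a},y_{j,b}$. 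The sum then collapses to a single binomial identity,
\begin{equation*}
c_{ij}=\sum_{\substack{T\subseteq A\\ |T|\geq 2}}(-1)^{|T|}=(1-1)^{l_i}-1+l_i=l_i-1,
\end{equation*}
using $(1-1)^{l_i}=0$ for $l_i\geq 1$. I expect the main obstacle to be exactly the reduction $[\,g_T=x_i\,]=[\,x_i\mid g_T\,]$ for $|T|\geq 2$: this is the only place the hypothesis $D_S(x_j)\cap D_i=\emptyset$ is used, and it rests on the gcd-closedness of $S$ (to place $g_T$ in $D_S(x_j)\subseteq S$) together with the incomparability of greatest-type divisors; once this reduction and the companion identification $|A|=l_i$ are secured, only the routine alternating-sum computation above remains.
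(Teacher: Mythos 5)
Your proof is correct and follows essentially the same route as the paper: both start from (\ref{cij2}), discard the leading term and the $r=1$ terms because $x_i\notin G_S(x_j)$, use the hypothesis $D_S(x_j)\cap D_i=\emptyset$ together with gcd-closedness to force $(y_{j,i_1},\ldots ,y_{j,i_r})=x_i$ precisely for the subsets of $D_i\cap G_S(x_j)$ with at least two elements, and finish with the alternating binomial sum $\sum_{r=2}^{l_i}(-1)^r\binom{l_i}{r}=l_i-1$. The only differences are cosmetic: your indicator formulation treats uniformly the degenerate case $G_S(x_j)\subseteq D_i$, which the paper handles as a separate case, and you spell out the incomparability argument showing $x_i\notin G_S(x_j)$, which the paper merely asserts.
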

\begin{proof}
	Let $G_S(x_j)=\{y_{j,1},\ldots ,y_{j,m}\}$. Since $x_i\in D_S(x_j)$ it is obvious that $D_i\cap G_S(x_j)\neq \emptyset$. Without loss of generality, we can assume that $D_i\cap G_S(x_j)=\{y_{j,1},\ldots ,y_{j,l_i}\}.$ Now, suppose that $G_S(x_j)-D_i=\emptyset$. Then, clearly $G_S(x_j)\subset D_i.$ Since $D_i\cap D_S(x_j)=\emptyset ,$ $D_S(x_j)$ must consist of only $x_i.$ In this case, it is clear that $|D_i\cap G_S(x_j)|=m$, and hence $l_i=m.$ Then, by (\ref{cij2}), we have 
	$$
	c_{ij}=(-1)^2\binom{m}{2} +(-1)^3\binom{m}{3} +\cdots +(-1)^m\binom{m}{m} =m-1.
	$$
	Now, consider the case $G_S(x_j)-D_i\neq \emptyset$.
	Let $y_{j,k}\in G_S(x_j)-D_i.$ If $y_{j,k}\in \{y_{j,i_1},\ldots ,y_{j,i_r}\}$ $(2\leq r\leq m)$, then, by the definition of $D_i$, we have $x_i\nmid y_{j,k}$, and hence $x_i\nmid (y_{j,i_1},\ldots ,y_{j,i_r})$. So, we have $\frac{(y_{j,i_1},\ldots ,y_{j,i_r})}{x_i}\notin \mathbb{Z}$.
	Then, we can write $c_{ij}$ as follows:
	\begin{equation} \label{cij3}
	c_{ij}=\sum_{d\mid \frac{x_j}{x_i}}\mu (d)+\sum_{r=1}^{l_i}(-1)^r\sum_{1\leq i_1<\ldots <i_r\leq l_i}\sum_{d\mid \frac{(y_{j,i_1},\ldots ,y_{j,i_r})}{x_i}}\mu (d).
	\end{equation}
	Since $x_i\in D_S(x_j)$, we have $x_i\notin G_S(x_j).$ So, by a well-known property of the M\"{o}bius function, $\sum_{d\mid \frac{y_{j,i_1}}{x_i}} \mu (d)=0$ for $1\leq i_1\leq l_i$. Thus, we can rewrite (\ref{cij3}) as follows: 
	\begin{equation} \label{cij4}
	c_{ij}=\sum_{r=2}^{l_i}(-1)^r\sum_{1\leq i_1<\ldots <i_r\leq l_i}\sum_{d\mid \frac{(y_{j,i_1},\ldots ,y_{j,i_r})}{x_i}}\mu (d).
	\end{equation}
	Since $D_i\cap D_S(x_j)=\emptyset$ and $\{y_{j,i_1},\ldots ,y_{j,i_r}\}\subset D_i,$ we have $(y_{j,i_1},\ldots ,y_{j,i_r})=x_i$ for every $2\leq r\leq l_i$ ($1\leq i_1<\cdots <i_r\leq l_i$). Then, by (\ref{cij4}), we have $c_{ij}=l_i-1.$
\end{proof}
When $x_i\in D_S(x_j)$ and $D_i\cap D_S(x_j)\neq \emptyset,$ it is really a hard task to calculate the values of $c_{ij}$ on all possible gcd-closed sets; however, by making some restrictions on the set $S$, we can obtain a formula for the values of $c_{ij}$. In order to do this, we denote by $\text{Min}(D_i\cap D_S(x_j))$ the set of all the minimal elements in $D_i\cap D_S(x_j)$ with respect to the divisibility relation on $S$.

\begin{lem} \label{Dnonempty}
	Let $x_i\in D_S(x_j)$, $D_i\cap D_S(x_j)\neq \emptyset$ and $\text{Min}(D_i\cap D_S(x_j))=\{x_{i,1},\ldots ,x_{i,k}\}$.  Let $|D_{i,r}\cap D_{i,t}\cap G_S(x_j)|\leq 1$ for all $1\leq r<t\leq k$ when $k\geq 2$. Then  $c_{ij}=l_i-\sum_{t=1}^{k}l_{i,t}+(k-1)$, where $l_{i,t}=|D_{i,t}\cap G_S(x_j)|$. 
\end{lem}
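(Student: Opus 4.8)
The plan is to start from the closed form for $c_{ij}$ in Lemma~\ref{Lemmacijalphak} and strip it down to a pure inclusion--exclusion count over subsets of greatest-type divisors. Writing $A:=D_i\cap G_S(x_j)$, I first observe that each inner sum $\sum_{d\mid (y_{j,i_1},\ldots,y_{j,i_r})/x_i}\mu(d)$ equals $1$ when $(y_{j,i_1},\ldots,y_{j,i_r})=x_i$ and $0$ otherwise, so it can be nonzero only when every $y_{j,i_t}$ is a multiple of $x_i$, i.e. when $\{y_{j,i_1},\ldots,y_{j,i_r}\}\subseteq A$. Since $x_i\in D_S(x_j)$ forces $x_i<x_j$ and $x_i\notin G_S(x_j)$, the leading term $\sum_{d\mid x_j/x_i}\mu(d)$ and all the $r=1$ terms vanish. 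This reduces (\ref{cij2}) to $c_{ij}=\sum_{T\subseteq A,\,|T|\ge 2}(-1)^{|T|}\,[\gcd(T)=x_i]$, where $\gcd(T)$ denotes the gcd of the elements of $T$ and $[\cdot]$ is an indicator.

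Next I would convert the indicator using $[\gcd(T)=x_i]=1-[\gcd(T)>x_i]$, valid because every element of $A$ is a multiple of $x_i$. The key geometric step is to identify, for $|T|\ge 2$, the event $\gcd(T)>x_i$: here $\gcd(T)$ is a gcd of at least two greatest-type divisors, hence lies in $D_S(x_j)$, and if it exceeds $x_i$ it lies in $D_i\cap D_S(x_j)$, so by minimality it is divisible by some $x_{i,s}$. Setting $B_s:=D_{i,s}\cap G_S(x_j)$, a subset of $A$ of size $l_{i,s}$, gcd-closedness gives the clean equivalence $\gcd(T)>x_i \iff T\subseteq B_s$ for some $s$. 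Inclusion--exclusion over $s$ then yields $[\gcd(T)>x_i]=\sum_{\emptyset\ne P\subseteq\{1,\ldots,k\}}(-1)^{|P|+1}\,[\,T\subseteq \bigcap_{s\in P}B_s\,]$.

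Substituting and swapping the order of summation, the problem reduces to a family of elementary alternating binomial sums: for a set $C$ of size $c\ge 1$ one has $\sum_{T\subseteq C,\,|T|\ge 2}(-1)^{|T|}=c-1$. The constant part $\sum_{T\subseteq A,\,|T|\ge 2}(-1)^{|T|}$ contributes $l_i-1$, and each singleton $P=\{s\}$ contributes $-(l_{i,s}-1)$ (note $l_{i,s}\ge 2$, since $x_{i,s}$ is a gcd of at least two greatest-type divisors, each of which lies in $B_s$). Here the hypothesis $|D_{i,r}\cap D_{i,t}\cap G_S(x_j)|=|B_r\cap B_t|\le 1$ is exactly what I need: for $|P|\ge 2$ one has $\bigcap_{s\in P}B_s\subseteq B_r\cap B_t$, so $|\bigcap_{s\in P}B_s|\le 1$ and the corresponding binomial sum vanishes. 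Hence only the constant and singleton contributions survive, giving $c_{ij}=(l_i-1)-\sum_{t=1}^k(l_{i,t}-1)=l_i-\sum_{t=1}^k l_{i,t}+(k-1)$.

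I expect the main obstacle to be the second paragraph, namely rigorously justifying the equivalence $\gcd(T)>x_i\iff T\subseteq B_s$ for some $s$, which rests on $S$ being gcd-closed, on the minimality of the $x_{i,t}$ in $D_i\cap D_S(x_j)$, and on the fact that no element of $D_S(x_j)$ is itself a greatest-type divisor. Once this dictionary between ``large gcd'' and ``containment in some $B_s$'' is secured, the pairwise-intersection hypothesis mechanically collapses the higher inclusion--exclusion terms and the remaining computation is routine.
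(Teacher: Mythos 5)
Your proposal is correct and follows essentially the same route as the paper: after reducing to the subset sum in (\ref{cij4}), you identify the subsets $T$ with $\gcd(T)>x_i$ as exactly those contained in some $B_s=D_{i,s}\cap G_S(x_j)$, and the pairwise-intersection hypothesis makes your inclusion--exclusion over index sets $P$ collapse to the singleton terms, which is precisely the paper's observation that each such $T$ (of size at least $2$) lies in exactly one $B_t$, leading to the same alternating binomial sums as in (\ref{cij5}) and the identity $\sum_{r\geq 2}(-1)^r\binom{c}{r}=c-1$. Your second paragraph is in fact somewhat more careful than the paper about why the dictionary between ``$\gcd(T)>x_i$'' and ``$T\subseteq B_s$'' holds (gcd-closedness, minimality of the $x_{i,t}$, and the fact that no element of $D_S(x_j)$ is a greatest-type divisor), but this is a refinement of the same argument rather than a different one.
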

\begin{proof}
	Let $D_i\cap G_S(x_j)=\{y_{j,1},\ldots ,y_{j,l_i}\}$ without loss of generality. Since $x_i\in D_S(x_j)$, we can calculate $c_{ij}$ by (\ref{cij4}). Now, we consider the summand for $r=2$ in (\ref{cij4}). We want to find the number of terms such that $(y_{j,i_1},y_{j,i_2})=x_i$ or equivalently $(y_{j,i_1},y_{j,i_2})/x_i=1$ for $1\leq i_1<i_2\leq l_i.$ Since $|D_{i,r}\cap D_{i,t}\cap G_S(x_j)|\leq 1$,  $(y_{j,i_1},y_{j,i_2})$ is equal to $x_i$ or a multiple of only one element in $\text{Min}(D_i\cap D_S(x_j)).$ So, there exist $\binom{l_{i,1}}{2}$ terms such that $(y_{j,i_1},y_{j,i_2})$ is a multiple of $x_{i,1}$.
	By the same argument, the number of 2-tuples of $y_{j,i_1}$ and $y_{j,i_2}$ $(i_1<i_2)$ such that $(y_{j,i_1},y_{j,i_2})\neq x_i$ is $\sum_{t=1}^{k}\binom{l_{i,t}}{2}$.
	Here, it should be noted that there is no common subsets of $D_{i,r}\cap G_S(x_j)$ and $D_{i,t}\cap G_S(x_j)$ with two or more elements for $1\leq r<t\leq k$ by the hypothesis of the theorem.
	If we continue in this manner for $r=3,\ldots ,l_i$ we obtain that
	\begin{equation} \label{cij5}
	c_{ij}=(-1)^2\left[ \binom{l_i}{2}-\sum_{t=1}^{k}\binom{l_{i,t}}{2}\right] +\cdots +(-1)^{l_i}\left[ \binom{l_i}{l_i}-\sum_{t=1}^{k}\binom{l_{i,t}}{l_i}\right]. 
	\end{equation}
	Here, for convenience, we can assume $\binom{n}{m}=0$ whenever $n<m.$ Thus, we obtain
	\begin{eqnarray*}
		c_{ij} &=& \sum_{r=2}^{l_i} \left[ (-1)^r \left( \binom{l_i}{r}-\sum_{t=1}^{k}\binom{l_{i,t}}{r}\right) \right] \\
		&=& (l_i-1)- \sum_{t=1}^{k}(l_{i,t}-1) \\
		&=& l_i-\sum_{t=1}^{k}l_{i,t}+(k-1),
	\end{eqnarray*}
which concludes the proof. 
\end{proof}

Let $(L, \leq )$ be a finite meet semilattice. Haukkanen, Mattila and M\"{a}ntysalo determined the zeros of the M\"{o}bius function of $L$, see \cite[Lemma~3.1]{HaukkanenMattilaMantysalo2015}. If we take  $(L, \leq )=(S,|)$, where $S$ is a gcd-closed set of distinct positive integers and $|$ is the divisibility relation on $\mathbb{Z}$, we can restate their claim as follows:
\begin{equation*}
\mu_S(\frac{x}{z})=0 \text{  unless  } gcd(G_S(x))\ | \ z \ | \ x.
\end{equation*} 
The following lemma is a generalization of the above result in the number theoretical setting and by using it, we can determine the zeros of $c_{ij}$ on a gcd-closed set. 
\begin{lem} \label{cijzeroLemma}
	Let $1\leq i \neq j\leq n$. If $x_i \notin G_S(x_j)$ and  $x_i \notin D_S(x_j)$, then $c_{ij}=0$.
	\end{lem}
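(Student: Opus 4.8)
The plan is to substitute the closed form for $c_{ij}$ from Lemma~\ref{Lemmacijalphak} and argue that every summand vanishes under the two hypotheses. I would first dispose of the case $x_i\nmid x_j$: then no $d$ satisfies $dx_i\mid x_j$, so the defining sum (\ref{cij1}) is empty and $c_{ij}=0$; note this case is in fact already covered by the hypotheses, since $G_S(x_j)$ and $D_S(x_j)$ consist only of divisors of $x_j$. The substantive case is therefore $x_i\mid x_j$ with $i\neq j$, where I would invoke formula (\ref{cij2}) with $G_S(x_j)=\{y_{j,1},\ldots,y_{j,m}\}$.

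The essential tool is the elementary M\"{o}bius identity: $\sum_{d\mid N}\mu(d)$ equals $1$ when $N=1$ and $0$ when $N$ is an integer exceeding $1$, together with the convention that $\sum_{d\mid M/x_i}\mu(d)$ is an empty sum, hence $0$, whenever $x_i\nmid M$. Formula (\ref{cij2}) presents $c_{ij}$ as the leading term $\sum_{d\mid x_j/x_i}\mu(d)$ plus an alternating sum whose blocks are $\sum_{d\mid (y_{j,i_1},\ldots,y_{j,i_r})/x_i}\mu(d)$; by the identity each block equals $1$ exactly when its associated greatest common divisor equals $x_i$ and $0$ otherwise. I would then read off the three types of contributions. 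The leading term vanishes because $x_i\mid x_j$ and $x_i\neq x_j$ force $x_j/x_i\geq 2$. Each block with $r=1$ is nonzero only if $y_{j,i_1}=x_i$, i.e. $x_i\in G_S(x_j)$, which is excluded. Each block with $r\geq 2$ is nonzero only if $(y_{j,i_1},\ldots,y_{j,i_r})=x_i$; but every such greatest common divisor lies in $D_S(x_j)$ by definition, and $x_i\notin D_S(x_j)$. Hence all summands vanish and $c_{ij}=0$.

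The conceptual heart of the argument is the observation that $G_S(x_j)\cup D_S(x_j)$ is exactly the set of greatest common divisors of all non-empty subsets of $G_S(x_j)$; once this is recognized, excluding $x_i$ from both $G_S(x_j)$ and $D_S(x_j)$ annihilates every term of (\ref{cij2}) in one stroke. I do not expect a genuine obstacle here: the only point demanding care is the correct handling of the non-integer quotients $(y_{j,i_1},\ldots,y_{j,i_r})/x_i$, so stating the M\"{o}bius identity together with the empty-sum convention at the outset is the cleanest way to keep the bookkeeping transparent. It is also worth remarking that this computation recovers, in the number-theoretic setting, the zero pattern of the semilattice M\"{o}bius function quoted just before the statement.
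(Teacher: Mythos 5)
Your proposal is correct and follows essentially the same route as the paper: dispose of $x_i\nmid x_j$ via the empty sum, then substitute into formula (\ref{cij2}) and kill the leading term with $x_i\mid x_j$, $x_i\neq x_j$, the $r=1$ blocks with $x_i\notin G_S(x_j)$, and the $r\geq 2$ blocks with $x_i\notin D_S(x_j)$, all by the standard identity $\sum_{d\mid N}\mu(d)=0$ for $N\neq 1$. The paper merely compresses your $r=1$ and $r\geq 2$ cases into the single observation that $(y_{j,i_1},\ldots,y_{j,i_r})\neq x_i$ for all $r$; otherwise the two arguments coincide.
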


\begin{proof}
If $x_i \nmid x_j$, then it is clear that $c_{ij}=0$. Now, let $x_i \mid x_j$ and  $G_S(x_j)=\{y_{j,1},\ldots, y_{j,m}\}$. Since $\sum_{d\mid \frac{x_j}{x_i} } \mu (d)=0$ whenever $x_i \neq x_j$, by (\ref{cij2}), we have 
$$
c_{ij}=\sum_{r=1}^{m} (-1)^r \sum_{1\leq i_1 < \cdots < i_r \leq m} \sum_{ d\mid \frac{(y_{j,i_1},\ldots, y_{j,i_r})}{x_i}} \mu (d).
$$
Now, consider the sum $\sum_{ d\mid \frac{(y_{j,i_1},\ldots, y_{j,i_r})}{x_i}} \mu (d)$ for $1\leq r \leq m$. Since $x_i \notin G_S(x_j)$ and $x_i \notin D_S(x_j)$, we always have $(y_{j,i_1},\ldots, y_{j,i_r})\neq {x_i}$. Therefore, by a well-known property of the M\"{o}bius function, $\sum_{ d\mid \frac{(y_{j,i_1},\ldots, y_{j,i_r})}{x_i}} \mu (d)=0$ for all $1\leq r \leq m$. This completes the proof. 
\end{proof}

\begin{lem} \label{sumcijzeroLemma}
For $ j > 1$, we have $\sum_{i=1}^{n}c_{ij}=0$ or equivalently $\sum_{x_i\mid x_j}c_{ij}=0$.
\end{lem}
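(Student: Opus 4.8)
The plan is to prove the single identity $\sum_{x_i\mid x_j}c_{ij}=0$ for $j>1$; the two formulations in the statement are equivalent because $c_{ij}=0$ whenever $x_i\nmid x_j$ (immediate from (\ref{cij1}), and a special case of Lemma~\ref{cijzeroLemma}), so the full column sum $\sum_{i=1}^{n}c_{ij}$ collapses to the sum over the divisors $x_i\mid x_j$ in $S$. First I would rewrite $c_{ij}$ by means of the inclusion–exclusion formula (\ref{cij2}). The key is that every inner M\"obius sum is an indicator: since $\sum_{d\mid N}\mu(d)=1$ if $N=1$ and $0$ otherwise, we get $\sum_{d\mid x_j/x_i}\mu(d)=[x_i=x_j]$ and $\sum_{d\mid (y_{j,i_1},\ldots,y_{j,i_r})/x_i}\mu(d)=[x_i=(y_{j,i_1},\ldots,y_{j,i_r})]$, with the convention that the sum is $0$ when the quotient is not an integer. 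Writing $G_S(x_j)=\{y_{j,1},\ldots,y_{j,m}\}$, this turns (\ref{cij2}) into
\[
c_{ij}=[x_i=x_j]+\sum_{r=1}^{m}(-1)^r\,\#\{(i_1,\ldots,i_r):i_1<\cdots<i_r,\ (y_{j,i_1},\ldots,y_{j,i_r})=x_i\},
\]
so that $c_{ij}$ counts, with alternating signs, the $r$-element subsets of $G_S(x_j)$ whose gcd equals $x_i$.

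Next I would sum this over all $x_i\mid x_j$ and interchange the order of summation. The first term contributes $1$. For each fixed $r$, the decisive observation — the one point where gcd-closedness is used — is that the gcd $(y_{j,i_1},\ldots,y_{j,i_r})$ of any $r$ greatest-type divisors is again an element of $S$ dividing $x_j$. Hence each of the $\binom{m}{r}$ $r$-subsets of $G_S(x_j)$ is counted exactly once as $x_i$ runs over the divisors of $x_j$ in $S$, namely for the $x_i$ equal to that subset's gcd, whence $\sum_{x_i\mid x_j}\#\{\ldots\}=\binom{m}{r}$. Therefore
\[
\sum_{x_i\mid x_j}c_{ij}=1+\sum_{r=1}^{m}(-1)^r\binom{m}{r}=\sum_{r=0}^{m}(-1)^r\binom{m}{r}=(1-1)^m .
\]
This vanishes exactly when $m\geq1$, so it remains to note $m=|G_S(x_j)|\geq1$ for $j>1$. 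This holds because the least element $x_1$ divides every element of $S$: indeed $(x_1,x_j)\in S$ divides $x_1$, and minimality of $x_1$ forces $(x_1,x_j)=x_1$, i.e.\ $x_1\mid x_j$; thus for $j>1$ the element $x_j$ has a proper divisor in $S$ and hence at least one greatest-type divisor.

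I expect the main obstacle to be the combinatorial bookkeeping in the second paragraph: distinct $r$-subsets may share the same gcd, so the identification of the inner sum with $\binom{m}{r}$ must be argued by counting subsets rather than their (possibly repeated) gcd values, and it rests essentially on gcd-closedness guaranteeing that every such gcd already lies in $S$. A conceptually equivalent but slicker route worth recording is to observe that the claim is the $e=0$ instance of the identity $\alpha_{e,j}=\sum_{x_i\mid x_j}x_i^{e}c_{ij}$, which generalizes (\ref{alphajbycij}): since $\xi_0\ast\mu=\varepsilon$ (the Dirichlet identity, $\varepsilon(1)=1$ and $\varepsilon(d)=0$ for $d>1$), formula (\ref{alphaek}) gives $\alpha_{0,j}=0$ for every non-minimal $x_j$, which is precisely $\sum_{x_i\mid x_j}c_{ij}=0$.
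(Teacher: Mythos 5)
Your proof is correct and follows essentially the same route as the paper's: both expand $c_{ij}$ via (\ref{cij2}), interchange the order of summation, use the M\"obius indicator together with gcd-closedness to see that each $r$-subset of $G_S(x_j)$ is counted exactly once, at $x_i=(y_{j,i_1},\ldots,y_{j,i_r})$, and finish with the alternating binomial identity $\sum_{r=1}^{m}(-1)^r\binom{m}{r}=-1$; the only cosmetic difference is that the paper first peels off $c_{jj}=1$ (via Lemma~\ref{FHZLEMMA}) and sums over proper divisors, whereas you keep everything unified through the indicator $[x_i=x_j]$, and you additionally make explicit two small points the paper leaves implicit (that $m\geq 1$ for $j>1$, and that $c_{ij}=0$ when $x_i\nmid x_j$). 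Your closing $e=0$ observation via $\xi_0\ast\mu=\varepsilon$ is a nice alternative, though note it rests on extending (\ref{alphajbycij}) from $e=1$ to $e=0$, which the paper does not itself prove.
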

	
\begin{proof}
	Since $c_{jj}=1$ we have to prove that $\sum_{x_i\mid x_j \atop x_i < x_j} c_{ij}=-1$. Let $G_S(x_j)=\{ y_{j,1},\ldots, y_{j,m} \}$. Then, by (\ref{cij2}), we have
\begin{equation} \label{sumcij}
  \sum_{x_i\mid x_j \atop x_i < x_j }c_{ij} = \sum_{r=1}^{m} (-1)^r \sum_{1\leq i_1 < \cdots < i_r \leq m} \sum_{x_i\mid x_j \atop x_i<x_j} \sum_{d|\frac{(y_{j,i_1},\ldots, y_{j,i_r})}{x_i}} \mu(d).
\end{equation}
Let $k$ be an arbitrary fixed integer such that $1\leq k \leq m$. Now, we consider the summand for $r=k$ in (\ref{sumcij}). Since $\sum_{d|\frac{(y_{j,i_1},\ldots, y_{j,i_k})}{x_i}} \mu(d)=0$ unless $x_i=(y_{j,i_1},\ldots, y_{j,i_k})$, the summand for (\ref{sumcij}) is equal to $(-1)^k$ times the number of $k-$element subsets of $G_S(x_j)$, namely $(-1)^k\binom{m}{k}$. Thus, $\sum_{x_i\mid x_j \atop x_i < x_j}c_{ij} = \sum_{r=1}^{m} (-1)^r \binom{m}{r}=-1$. This completes the proof.   

\end{proof}

\begin{lem}\label{AliLemma}
	Let $G_S(x_n)=\{ y_{n,1},\ldots, y_{n,m} \}$ and $D_S(x_n)=\{ x_{n_1},\ldots, x_{n_t} \}$. Then, we have
	\begin{equation*}
	\left[ y_{n,1},\ldots, y_{n,m} \right] = \frac{\prod_{i=1}^{m}  y_{n,i}}{ \prod_{k=1}^t x_{n_k}^{c_{n_kn}} }.
	\end{equation*}
	with $c_{n_kn}$ defined as in (\ref{cij1}). Moreover, $\sum_{k=1}^{t} c_{n_kn}= m-1$. 
\end{lem}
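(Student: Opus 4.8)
The plan is to prove the product formula prime-by-prime through $p$-adic valuations and the inclusion--exclusion identity for the maximum, and then to recognize the resulting exponents as the $c_{n_kn}$ by means of formula (\ref{cij2}). First I would fix a prime $p$, write $v_p$ for the $p$-adic valuation, and apply the elementary identity
$$\max_{1\le i\le m} a_i = \sum_{\emptyset \ne T \subseteq \{1,\ldots,m\}} (-1)^{|T|+1} \min_{i\in T} a_i$$
to $a_i = v_p(y_{n,i})$. Using $v_p([y_{n,1},\ldots,y_{n,m}]) = \max_i v_p(y_{n,i})$ and $v_p(\gcd_{i\in T} y_{n,i}) = \min_{i\in T} v_p(y_{n,i})$, this gives $v_p([y_{n,1},\ldots,y_{n,m}]) = \sum_{\emptyset \ne T} (-1)^{|T|+1}\, v_p(\gcd_{i\in T} y_{n,i})$. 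Since this holds for every prime, it lifts to the multiplicative identity
$$[y_{n,1},\ldots,y_{n,m}] = \prod_{\emptyset\ne T}\Big(\gcd_{i\in T} y_{n,i}\Big)^{(-1)^{|T|+1}} = \prod_{i=1}^m y_{n,i} \cdot \prod_{|T|\ge 2}\Big(\gcd_{i\in T} y_{n,i}\Big)^{(-1)^{|T|+1}},$$
where I have split off the singleton subsets, which produce exactly $\prod_{i=1}^m y_{n,i}$.

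Next I would group the subsets $T$ with $|T|\ge2$ according to the value of $\gcd_{i\in T} y_{n,i}$. By definition every such gcd lies in $D_S(x_n)$, so the exponent attached to a fixed $x_{n_k}$ is $\sum_{|T|\ge2,\ \gcd_{i\in T}y_{n,i}=x_{n_k}} (-1)^{|T|+1}$. The bridge to $c_{n_kn}$ is formula (\ref{cij2}) with $i=n_k$ and $j=n$: the leading sum $\sum_{d\mid x_n/x_{n_k}}\mu(d)$ vanishes since $x_{n_k}\ne x_n$, and by the standard property $\sum_{d\mid N}\mu(d)=1$ if $N=1$ and $0$ otherwise, each inner sum $\sum_{d\mid (y_{n,i_1},\ldots,y_{n,i_r})/x_{n_k}}\mu(d)$ equals $1$ exactly when $\gcd_{i\in T}y_{n,i}=x_{n_k}$ and $0$ otherwise. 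In particular the $r=1$ terms contribute nothing, since each singleton gcd is a greatest-type divisor and hence lies outside $D_S(x_n)$. This yields $c_{n_kn}=\sum_{|T|\ge2,\ \gcd=x_{n_k}}(-1)^{|T|}$, which is precisely the negative of the exponent above, so $\prod_k x_{n_k}^{(-1)^{|T|+1}\text{-exponent}}=\prod_k x_{n_k}^{-c_{n_kn}}$ and the product formula follows.

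For the final claim I would sum the identity $c_{n_kn}=\sum_{|T|\ge2,\ \gcd=x_{n_k}}(-1)^{|T|}$ over $k$. Since every subset $T$ with $|T|\ge2$ has its gcd in exactly one class $x_{n_k}\in D_S(x_n)$, the count collapses to
$$\sum_{k=1}^t c_{n_kn} = \sum_{|T|\ge2}(-1)^{|T|} = \sum_{r=2}^m (-1)^r \binom{m}{r} = m-1,$$
the last step using $\sum_{r=0}^m(-1)^r\binom{m}{r}=0$. Alternatively this also drops out of Lemma~\ref{sumcijzeroLemma}: the divisors of $x_n$ in $S$ partition into $x_n$ itself with $c_{nn}=1$, the $m$ greatest-type divisors each with value $-1$ by Lemma~\ref{FHZLEMMA}, the elements of $D_S(x_n)$, and the remaining divisors which contribute $0$ by Lemma~\ref{cijzeroLemma}; setting the total equal to zero again gives $\sum_k c_{n_kn}=m-1$.

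I expect the main obstacle to be the bookkeeping in the grouping step: one must treat the many-to-one correspondence between subsets of greatest-type divisors and their gcd values correctly, and invoke the M\"obius vanishing with the right convention so that all subsets whose gcd differs from $x_{n_k}$---including those whose gcd is not even divisible by $x_{n_k}$---are properly excluded from the count defining $c_{n_kn}$. The valuation-based inclusion--exclusion and the closing binomial identity are routine by comparison.
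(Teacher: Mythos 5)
Your proposal is correct and takes essentially the same route as the paper: both express the lcm as an alternating product of subset-gcds verified prime-by-prime via $p$-adic valuations (the paper writes this as the parity-split identity $[a_1,\ldots,a_m]=T_1T_3\cdots/T_2T_4\cdots$, which is your max--min inclusion--exclusion in different clothing), then group the gcds by their value in $D_S(x_n)$ and identify the exponents with $c_{n_kn}$ through (\ref{cij2}). Your direct binomial count for $\sum_{k=1}^{t}c_{n_kn}=m-1$ merely inlines the proof of Lemma~\ref{sumcijzeroLemma}, which the paper cites instead, so this too is the same argument.
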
	

\begin{proof}
	Firstly we claim that for positive integers $a_1,a_2,\ldots ,a_m$,
	$$
	[a_1,\ldots ,a_m]=\frac{T_1 T_3\cdots T_{m-1}}{T_2 T_4\cdots T_{m}} 
	$$
	if $m$ is even
	$$
	[a_1,\ldots ,a_m]=\frac{T_1 T_3\cdots T_{m}}{T_2 T_4\cdots T_{m-1}} 
	$$
otherwise. Here $T_1=\prod_{i=1}^{m}a_i$ and $T_k=\prod_{1\leq i_1<\cdots <i_k\leq m} (a_{i_1},\ldots ,a_{i_k})$ for $2\leq k\leq m$.
We will prove the claim when $m$ is even. It is sufficient to prove that $[a_1,\ldots ,a_m]T_2\cdots T_m=T_1T_3\cdots T_{m-1}.$
	
Consider a prime number $p$ such that $p\mid [a_1,\ldots ,a_m]$. For $a\in \mathbb{Z}^+,$ let $\nu_p(a)$ denote the largest integer such that $p^{\nu_p(a)}$ divides $a$. Without loss of generality, we can assume that $\nu_p(a_1)\leq \cdots \leq \nu_p(a_m)$. Then, we have
\begin{eqnarray*}
		\nu_p\left( [a_1,\ldots ,a_m]T_2T_4\cdots T_m \right)  &=& \nu_p(a_m)+\sum_{i=1}^{m-1}\left (\sum_{j=1}^{m/2}\binom{m-i}{2j-1}\right )\nu_p(a_i)\\
		&=& \nu_p(a_m)+\sum_{i=1}^{m-1}\left (\sum_{j=1}^{m/2}\binom{m-i}{2j}\right )\nu_p(a_i)\\
		&=& \nu_p(T_1T_3\cdots T_{m-1}).
\end{eqnarray*}
Here, for convenience, we assume $\binom{i}{j}=0$ whenever $j>i$. Thus, 
$$
[a_1,\ldots ,a_m]T_2T_4\cdots T_m=T_1T_3\cdots T_{m-1}.
$$ 
We can similarly prove the case that $m$ is odd. Assuming that $m$ is even, by our claim, we have 
$$
	[y_{n,1},\ldots y_{n,m}]=\prod_{i=1}^{m}y_{n,i}\frac {T_3 T_5\cdots T_{m-1}}{T_2 T_4\cdots T_m}
$$
	where $T_k=\prod_{1\leq i_1<\cdots < i_k\leq m}(y_{n,i_1},\ldots ,y_{n,i_k})$.
	Since every $(y_{n,i_1},\ldots ,y_{n,i_k})$ is in $D_S(x_n)$ for $k \geq 2$, we can write $T_k=x_{n_1}^{\beta_{n_1}} \cdots x_{n_t}^{\beta_{n_t}}$, where each $\beta_{n_r}$ is a nonnegative integer for $1\leq r\leq t$. Indeed, 
$$\beta_{n_r}= \left| \left\lbrace (y_{n,i_1},\ldots y_{n,i_k}) :  (y_{n,i_1},\ldots y_{n,i_k})= x_{n_r}, 1\leq i_1<\cdots < i_k\leq m \right\rbrace \right|. $$ 
Thus, by (\ref{cij2}), it is clear that the exponent of $x_{n_r}$ in the fraction $\frac {T_2 T_4\cdots T_m}{T_3 T_5\cdots T_{m-1}}$	is equal to $c_{n_rn}$. Furthermore, by Lemma~\ref{sumcijzeroLemma}, we obtain that $\sum_{k=1}^{t} c_{n_kn}= m-1$.
\end{proof}

\section{Main Results}
In this section, we give main results of our paper. For the proof of the first three results, we use Zhao's approach \cite{Zhao2014}, that is, we will prove that an entry of the product $[S](S)^{-1}$ is in the interval $(0,1)$. Throughout this section, we denote $[S](S)^{-1}$ by $U$, where $[S]$ is the LCM matrix and $(S)$ is the GCD matrix, and we assume that $S$ is gcd-closed. 
\begin{thm}\label{DSxn1}
	Let $S=\{x_1,x_2,\ldots,x_m\}$ with $m>5$. Let $x_n\in S$ such that $n \geq 5$, $G_S(x_n)=\{x_2,\ldots,x_{n-1} \}$ and $gcd( G_S(x_n))=x_1$. If $x_i\mid x_n$ and $x_i\notin D_S(x_n)$ for all $n< i \leq m$, then $(S) \nmid [S]$.
\end{thm}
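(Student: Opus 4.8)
The plan is to follow the strategy announced for this section: assuming $(S)\mid[S]$, I would exhibit an entry of $U=[S](S)^{-1}$ lying strictly in the open interval $(0,1)$, contradicting $U\in M_m(\mathbb{Z})$. The decisive first step is to unwind the hypotheses into usable combinatorial structure. Since $x_1=\gcd(G_S(x_n))$ divides every $x_k\in G_S(x_n)$, each such $x_k$ divides $x_n$, and by assumption every $x_i$ with $i>n$ divides $x_n$ as well, I would first note that $x_n=\operatorname{lcm}(S)$ is the maximum of $(S,\mid)$. More importantly, I would prove that the hypotheses force $D_S(x_n)=\{x_1\}$: every element of $D_S(x_n)$ lies in $S$, is distinct from $x_n$, and, being a proper divisor of a greatest-type divisor, cannot itself be a greatest-type divisor of $x_n$; hence it lies in $\{x_1\}\cup\{x_{n+1},\dots,x_m\}$, and the hypothesis $x_i\notin D_S(x_n)$ for $i>n$ leaves only $x_1$. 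Equivalently $(x_a,x_b)=x_1$ for all distinct $a,b\in\{2,\dots,n-1\}$, so the integers $b_k:=x_k/x_1$ $(2\le k\le n-1)$ are pairwise coprime and each $\ge 2$.

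With $x_n$ maximal, the $n$-th column of $(S)^{-1}$ collapses: in Lemma~\ref{BourqueLighLemma} the only $l$ with $x_n\mid x_l$ is $l=n$, so $w_{kn}=c_{kn}/\alpha_n$. I would then read off the relevant $c_{kn}$: Lemma~\ref{FHZLEMMA} gives $c_{nn}=1$ and $c_{kn}=-1$ for $x_k\in G_S(x_n)$; Lemma~\ref{cijzeroLemma} kills the remaining (extra) elements, $c_{kn}=0$; and since $x_1\in D_S(x_n)$ with $D_1\cap D_S(x_n)=\emptyset$, Lemma~\ref{Dempty} yields $c_{1n}=l_1-1=(n-2)-1=n-3$. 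Choosing any $x_i\in G_S(x_n)$ and using $[x_i,x_n]=x_n$, $[x_i,x_1]=x_i$, and $[x_i,x_k]=x_ix_k/x_1$ for $k\neq i$ (by pairwise coprimality), the entry $U_{in}$ becomes $N_i/\alpha_n$ where, with $\Sigma:=\sum_{k=2}^{n-1}x_k$,
\[
\alpha_n=x_n-\Sigma+(n-3)x_1,\qquad N_i=x_n+(n-4)x_i-\tfrac{x_i}{x_1}(\Sigma-x_i),
\]
the formula for $\alpha_n$ also being confirmed through \eqref{alphajbycij}.

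It then remains to prove $0<N_i<\alpha_n$. For the upper bound I would compute directly
\[
\alpha_n-N_i=\frac{x_i-x_1}{x_1}\Big[(\Sigma-x_i)-(n-3)x_1\Big],
\]
which is positive because $x_i>x_1$ and $\Sigma-x_i=\sum_{k\neq i}x_k$ is a sum of $n-3$ terms each strictly exceeding $x_1$. For the lower bound I would use that $x_n$ is a multiple of $\operatorname{lcm}(x_2,\dots,x_{n-1})=x_1\prod_{k}b_k$, so that after factoring out $x_1b_i$ one is reduced to the elementary inequality $\prod_{k\neq i}b_k+(n-4)>\sum_{k\neq i}b_k$; this holds since a product of $n-3\ge 2$ integers each $\ge 2$ dominates their sum while $n-4\ge 1$. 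Combining the two bounds gives $U_{in}\in(0,1)$, hence $(S)\nmid[S]$.

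The genuinely substantive part is the opening structural reduction $D_S(x_n)=\{x_1\}$, since it is what converts the a priori unwieldy $c_{ij}$ and $\alpha_n$ into the clean closed forms above; after that, the delicate point is the lower bound $N_i>0$, whose proof hinges on the minimality of $x_n$ as the least common multiple, the pairwise coprimality of the $b_k$, and the hypothesis $n\ge 5$. I expect the bookkeeping in determining exactly which $c_{kn}$ vanish, and in justifying $[x_i,x_k]=x_ix_k/x_1$ uniformly, to be the most error-prone step rather than the conceptually hard one.
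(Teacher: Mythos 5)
Your proposal is correct and takes essentially the same route as the paper: you show the same entry of $U=[S](S)^{-1}$ (the paper fixes $i=2$, you allow any $x_i\in G_S(x_n)$) lies strictly in $(0,1)$, using Lemmas~\ref{BourqueLighLemma}, \ref{FHZLEMMA}, \ref{Dempty} and \ref{cijzeroLemma} to get exactly the paper's values $c_{1n}=n-3$, $c_{kn}=0$ for $k>n$, and $\alpha_n=x_n-\sum_{k=2}^{n-1}x_k+(n-3)x_1$. The only difference is one of completeness, in your favor: where the paper delegates the bounds $\beta_n>0$ and $\alpha_n>\beta_n$ to Zhao's method (Lemma~2.9 of \cite{Zhao2014}), you prove them outright via the structural reduction $D_S(x_n)=\{x_1\}$ (hence pairwise coprimality of the $x_k/x_1$), the factorization $\alpha_n-N_i=\frac{x_i-x_1}{x_1}\bigl[(\Sigma-x_i)-(n-3)x_1\bigr]$, and the product-versus-sum inequality for the $n-3\geq 2$ integers $b_k\geq 2$.
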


\begin{proof}
We have to prove that $U \notin M_m(\mathbb{Z})$. To perform this, it is sufficient to show that $U_{2n} \notin \mathbb{Z}$. By Lemmas~\ref{BourqueLighLemma} and \ref{FHZLEMMA}, we have 
$$
U_{2n}=\frac{x_n-\sum_{i=2}^{n-1}[x_2,x_i]+\sum_{i=n+1}^{m} [x_2,x_i]c_{in} + [x_2,x_1]c_{1n}}{\alpha_n}.
$$ 
By Lemma~\ref{cijzeroLemma}, we have $c_{in}=0$ for $n+1 \leq i \leq m$ since $x_i$ is neither in $G_S(x_n)$ nor in $D_S(x_n)$ whenever $i>n$. In addition to this, $c_{1n}=n-3$ by Lemma~\ref{Dempty}. Then, we have $U_{2n}=\frac{x_n-\sum_{i=2}^{n-1}[x_2,x_i]+x_2(n-3)}{\alpha_n}$. Also, by Lemma~\ref{Lemmacijalphak}, $\alpha_n=x_n - \sum_{i=2}^{n-1} x_i + x_1 (n-3)$. Letting 
$$\beta_n:= x_n-\sum_{i=2}^{n-1}[x_2,x_i]+x_2(n-3),$$ we can write $U_{2n}$ as $U_{2n}=\frac{\beta_n}{\alpha_n}$. Here, one can show that $\beta_n > 0$ and $\alpha_n > \beta_n$ using Zhao's approach as in the proof of Lemma~2.9 in \cite{Zhao2014}. So, we have $ 0 < U_{2n} < 1$ which means that $U_{2n} \notin \mathbb{Z}$.     
\end{proof}

\begin{thm}\label{DSxnchain}
	Let $S=\{x_1,x_2,\ldots,x_t\}$ with $t> 5$. Let $x_n\in S$ such that $n\geq 5$, $G_S(x_n)=\{x_2, \ldots,x_{n-1}\}$, $gcd(G_S(x_n))=x_1$ and $D_S(x_n)=\{x_1,x_{n+1},\ldots,x_m\}$ $(m<t)$. If $x_i\mid x_n$ for all $n< i \leq t$ and $D_S(x_n)$ is a divisor chain, then $(S)\nmid [S]$.	
\end{thm}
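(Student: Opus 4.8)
The plan is to mirror the strategy of Theorem~\ref{DSxn1}: we establish $(S)\nmid[S]$ by exhibiting a non-integer entry of $U=[S](S)^{-1}$, the natural candidate again being $U_{2n}$. The only structural difference from Theorem~\ref{DSxn1} is that $D_S(x_n)$ is now the divisor chain $\{x_1,x_{n+1},\ldots,x_m\}$ rather than the singleton $\{x_1\}$, so the additional work lies in evaluating the coefficients $c_{in}$ attached to these chain elements and then recovering the bound $0<U_{2n}<1$.

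First I would expand $U_{2n}$ via Lemma~\ref{BourqueLighLemma} and classify the coefficients $c_{in}$ by type. By Lemma~\ref{FHZLEMMA}, $c_{in}=-1$ for each greatest-type divisor $x_i$ with $2\le i\le n-1$, and $c_{nn}=1$. By Lemma~\ref{cijzeroLemma}, $c_{in}=0$ whenever $i>m$, since such an $x_i$ lies neither in $G_S(x_n)$ nor in $D_S(x_n)$; the hypothesis $x_i\mid x_n$ for all $n<i\le t$ guarantees that these are the only remaining indices. Consequently,
\begin{equation*}
U_{2n}=\frac{x_n-\sum_{i=2}^{n-1}[x_2,x_i]+\sum_{k\in\{1,n+1,\ldots,m\}}[x_2,x_k]\,c_{kn}}{\alpha_n},
\end{equation*}
where, by Lemma~\ref{Lemmacijalphak}, $\alpha_n=x_n-\sum_{i=2}^{n-1}x_i+\sum_{k\in\{1,n+1,\ldots,m\}}x_k\,c_{kn}$.

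Next I would evaluate the chain coefficients $c_{kn}$. Since $D_S(x_n)$ is a divisor chain with least element $x_1=gcd(G_S(x_n))$, every non-maximal chain element $x_k$ has a unique immediate successor $x_{k'}$, so $\text{Min}(D_k\cap D_S(x_n))=\{x_{k'}\}$ is a singleton; with only one minimal element, the pairwise hypothesis of Lemma~\ref{Dnonempty} is vacuous, and that lemma gives $c_{kn}=l_k-l_{k'}$, where $l_k=|D_k\cap G_S(x_n)|$ counts the greatest-type divisors that are proper multiples of $x_k$. For the maximal chain element $x_m$ we have $D_m\cap D_S(x_n)=\emptyset$, so Lemma~\ref{Dempty} yields $c_{mn}=l_m-1$. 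Ordered along the chain, these differences telescope, and since $x_1\notin G_S(x_n)$ forces $l_1=|G_S(x_n)|=n-2$, one checks $\sum_k c_{kn}=l_1-1=n-3$, in agreement with Lemmas~\ref{AliLemma} and \ref{sumcijzeroLemma}.

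The main obstacle is the two-sided inequality $0<\beta_n<\alpha_n$ for the numerator $\beta_n:=x_n-\sum_{i=2}^{n-1}[x_2,x_i]+\sum_{k\in\{1,n+1,\ldots,m\}}[x_2,x_k]\,c_{kn}$, which genuinely feels the chain structure: each pairwise gcd $(x_2,x_i)$ is itself one of the chain elements, so the value of $[x_2,x_i]=x_2x_i/(x_2,x_i)$ depends on the position of $x_i$ along the chain, and both $\beta_n$ and $\alpha_n$ carry the telescoping coefficients. I would follow Zhao's approach from the proof of Lemma~2.9 in \cite{Zhao2014}: rewrite each least common multiple as $x_2x_i/(x_2,x_i)$ and $x_2x_k/(x_2,x_k)$, identify each gcd with the appropriate chain element through the divisor-chain ordering, and regroup the telescoped sums so that the positive and negative contributions can be compared against $\alpha_n$ term by term. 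I expect that extracting the common factor $x_2$ from the LCM terms reduces the estimate to a comparison among the chain elements and the greatest-type divisors analogous to the one in Theorem~\ref{DSxn1}, with the chain refining but not destroying the bound; verifying $\beta_n>0$ together with the strict domination $\alpha_n>\beta_n$ is the technical heart of the argument.
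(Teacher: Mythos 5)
Your setup is correct and coincides with the paper's own proof up to the point where the real work begins: the paper also targets $U_{2n}$, and your coefficient table is exactly its coefficient table, including $c_{1n}=(n-2)-l_{n+1}$ (your application of Lemma~\ref{Dnonempty} with a singleton $\text{Min}(D_k\cap D_S(x_n))$ is the right way to get $c_{kn}=l_k-l_{k'}$ along the chain, and $c_{mn}=l_m-1$ from Lemma~\ref{Dempty}), the vanishing of $c_{sn}$ for $s>m$, and the telescoping sum $n-3$. The gap is that the proposal stops precisely at what you yourself call the technical heart: the two inequalities $0<\beta_n<\alpha_n$ are promised, not proved, and here ``Zhao's approach'' cannot be cited off the shelf the way the paper does in Theorem~\ref{DSxn1}; the chain case needs three specific ingredients that your sketch lacks.

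First, a normalization you never make: the largest chain element $x_m$ is a gcd of at least two greatest-type divisors, so it divides some element of $G_S(x_n)$, which one may relabel as $x_2$; then $x_1\mid x_{n+1}\mid\cdots\mid x_m\mid x_2$, every chain element divides $x_2$, hence $[x_2,x_k]=x_2$ for all $x_k\in D_S(x_n)$ and the numerator collapses via the telescoping coefficients to $\gamma_n=x_n-\sum_{i=2}^{n-1}[x_2,x_i]+x_2(n-3)$, the same shape as in the singleton case. Your $\beta_n$ as written keeps the chain-dependent weights $[x_2,x_k]c_{kn}$, and the term-by-term comparison you envisage does not line up without $x_k\mid x_2$: the paper's grouped terms in $\alpha_n-\gamma_n$ have the form $\bigl(\frac{x_2}{x_s}-1\bigr)(x_i-x_s)$, whose positivity uses that $x_s$ properly divides $x_2$. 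Second, the lower bound $\gamma_n>0$ genuinely requires Lemma~\ref{AliLemma} in a substantive role, not merely as the consistency check on $\sum_k c_{kn}=n-3$ to which you confine it: the paper bounds $x_n\geq[x_2,\ldots,x_{n-1}]$, expands $[x_2,\ldots,x_{n-1}]=\prod_{i=2}^{n-1}x_i\big/\bigl(x_1^{c_{1n}}x_{n+1}^{c_{n+1,n}}\cdots x_m^{c_{mn}}\bigr)$ with positive exponents summing to $n-3$, extracts the maximal term $[x_2,x_r]$ with $(x_2,x_r)=x_1$ taken without loss of generality, and closes with $2^{n-4}\geq n-3$ for $n\geq 5$. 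Third, your regrouping of $\alpha_n-\gamma_n$ tacitly presupposes the counting identity $|\{x_i\in G_S(x_n):(x_2,x_i)=x_s\}|=c_{sn}$ for $s=1$ and $n+1\leq s\leq m$, which the paper has to prove (via the equivalence $(x_2,x_i)=x_s\Leftrightarrow x_i\in(G_S(x_n)\cap D_s)-(G_S(x_n)\cap D_{s+1})$, plus Lemma~\ref{Dempty} for $s=m$); this identity is exactly what matches the coefficients of the $x_s$ in $\alpha_n$ with the grouped numerator terms, and without it the comparison is not justified.
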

\begin{proof}
Since $D_S(x_n)$ is a divisor chain and $G_S(x_n)=\{x_2, \ldots,x_{n-1}\}$, we can assume that $x_1\mid x_{n+1} \mid x_{n+2} \mid \cdots \mid x_m$ and  $x_m \mid x_2$ without loss of generality. By Lemma~\ref{BourqueLighLemma}, we have 
$$
U_{2n}=\frac {\sum_{s=1}^{t}[x_2,x_s]c_{sn}}{\alpha_n}.
$$ 
By Lemmas \ref{FHZLEMMA}-\ref{cijzeroLemma}, we have 
	$$
	c_{sn}=\left\lbrace 
	\begin{array}{cl}
	(n-2)-l_{n+1}  &  \text{if }  s=1,\\ 
	-1 &   \text{if }  2\leq s\leq  n-1, \\
	1 &   \text{if }  s=n, \\ 
	l_s-l_{s+1} &  \text{if }  n+1\leq s\leq m-1, \\ 
	l_m-1 &  \text{if }  s=m, \\ 
	0  &  \text{if }  s>m.
	\end{array}
	\right. 
	$$
Then, we have $$ 
	U_{2n}=\frac{x_n-\sum_{i=2}^{n-1}[x_2,x_i] +\sum_{i=n+1}^{m-1}(l_i-l_{i+1}) x_2 +(l_m-1) x_2+[(n-2)-l_{n+1}]x_2}{\alpha_n},
	$$
and hence
	$$ U_{2n}=\frac{x_n-\sum_{i=2}^{n-1}[x_2,x_i]+x_2(n-3)}{\alpha_n}.$$
In what follows we let $\gamma_n:=x_n-\sum_{i=2}^{n-1}[x_2,x_i]+x_2(n-3)$.
	Since $n \geq 5$, we have  
$$ 
	\gamma_n>[x_2,\ldots,x_{n-1}]-\sum_{i=3}^{n-1}[x_2,x_i].
	$$
	By Lemma~\ref{AliLemma}, we know that  
\begin{equation*}
[x_2,\ldots,x_{n-1}]=\frac{\prod_{i=2}^{n-1}x_i}{x_1^{c_{1n} }  x_{n+1}^{c_{n+1,n}}\ldots x_m^{c_{mn}}},
\end{equation*} 
where $c_{1n}+c_{n+1,n}+\cdots+c_{mn}=n-3$ and $c_{in}>0$ for $i=1$ and $n+1\leq i\leq m$.
	
Suppose that $\max \{[x_2,x_i]:3\leq i\leq n-1\}=[x_2,x_r]$. By the definition of $D_S(x_n)$, it is clear that $(x_2,x_r) \in D_S(x_n)$. Without loss of generality, we can assume that $(x_2,x_r)=x_1$. Then
\begin{eqnarray*}
\gamma_n &>& \frac{x_2x_r}{x_1} \left(\frac{\prod_{i=3 \atop i\neq r}^{n-1}x_i}{x_1^{c_{1n} -1}  x_{n+1}^{c_{n+1,n}}\cdots x_m^{c_{nm}}}-(n-3) \right)\\ 
&\geq & \frac{x_2x_r}{x_1} \left( 2^{n-4}-(n-3)\right)\\
	&\geq & 0.
\end{eqnarray*}
On the other hand, by Lemma~\ref{Lemmacijalphak}, we have $$\alpha_n=x_n-\sum_{i=2}^{n-1}x_i +\sum_{i=n+1}^{m-1}(l_i-l_{i+1}) x_i +(l_m-1) x_m+[(n-2)-l_{n+1}]x_1.$$
Now, we show that $\alpha_n$ is greater than $\gamma_n$. 
\begin{eqnarray*}
\alpha_n - \gamma_n &=& \sum_{i=3}^{n-1}([x_2,x_i]-x_i) + (x_1-x_2)(n-2-l_{n+1})+(x_m-x_2)(l_m
-1)  \\
& & + \sum_{i=n+1}^{m-1}(x_i-x_2)(l_i-l_{i+1}).
\end{eqnarray*}
We claim that $| \{ x_i\in G_S(x_n): (x_2,x_i)=x_s\}|=c_{sn}$ for $s=1$ or $n+1\leq s \leq m$. For $s=1$, 
\begin{equation*}
\begin{array}{ll}
(x_2,x_i)=x_1 & \Leftrightarrow   x_{n+1} \nmid x_i \\
& \Leftrightarrow x_{i} \notin D_{n+1} \\
&  \Leftrightarrow x_{i} \in (G_S(x_n) \cap D_1) - (G_S(x_n) \cap D_{n+1})
\end{array}
\end{equation*}
and for $n+1 \leq s \leq m-1$,
\begin{eqnarray*}
(x_2,x_i)=x_s & \Leftrightarrow &  x_{s+1} \nmid x_i \\
& \Leftrightarrow & x_{i} \notin D_{s+1} \\
&  \Leftrightarrow &  x_{i} \in (G_S(x_n) \cap D_s) - (G_S(x_n) \cap D_{s+1}).
\end{eqnarray*}
Also, our claim for $s=m$ is a direct consequence of Lemma~\ref{Dempty}. 
Now, we can rewrite $\alpha_n -\gamma_n$ according to $(x_2,x_i)$ for $3\leq i \leq n-1$.
\begin{equation*}
\alpha_n -\gamma_n = \sum_{(x_2,x_i)=x_1} ( [x_2,x_i]-x_i+ x_1-x_2) + \sum_{k=n+1}^{m} \sum_{(x_2,x_i)=x_k} ( [x_2,x_i]-x_i + x_k-x_2).
\end{equation*} 
It is clear that in the first sum
\begin{equation*}
[x_2,x_i]-x_i+ x_1-x_2 = (\frac{x_2}{x_1}-1)(x_i-x_1) >0
\end{equation*}
and in the second sum
\begin{equation*}
[x_2,x_i]-x_i+ x_k-x_2 = (\frac{x_2}{x_k}-1)(x_i-x_k) >0.
\end{equation*}
Thus, $\alpha_n -\gamma_n >0$, and hence $U_{2n} = \frac{\gamma_n}{\alpha_n}$ is not an integer. 
\end{proof}

\begin{thm}\label{dsxn3}
	Let $S=\{ x_1, x_2, \ldots, x_m\}$ with $m> 5$. Let $x_n \in S$ such that  $G_S(x_n)=\{x_2,\ldots,x_{n-1} \}$, $gcd(G_S(x_n))=x_1$ and $D_S(x_n)=\{x_1,x_{n+1}, x_{n+2} \}$. If $x_i\mid x_n$ for all $n< i \leq m$, then $(S) \nmid [S]$. 
\end{thm}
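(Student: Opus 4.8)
The plan is to mirror the strategy of Theorems~\ref{DSxn1} and \ref{DSxnchain}: exhibit a single off-diagonal entry of $U=[S](S)^{-1}$ lying strictly in $(0,1)$, so that $U\notin M_m(\mathbb{Z})$ and hence $(S)\nmid[S]$. First I would observe that the hypotheses force $x_n$ to be the largest element of $S$: the greatest-type divisors $x_2,\dots,x_{n-1}$ and their gcd $x_1$ all divide $x_n$, and $x_i\mid x_n$ for $n<i\le m$ by assumption, so every element of $S$ divides $x_n$. Hence $x_n$ is the only multiple of $x_n$ in $S$, and Lemma~\ref{BourqueLighLemma} collapses to $w_{sn}=c_{sn}/\alpha_n$ (using $c_{nn}=1$), giving
\[
U_{2n}=\frac{1}{\alpha_n}\sum_{x_s\mid x_n}[x_2,x_s]\,c_{sn}.
\]
Using Lemma~\ref{FHZLEMMA} ($c_{in}=-1$ for $2\le i\le n-1$), Lemma~\ref{cijzeroLemma} (the remaining divisors of $x_n$ contribute $0$), Lemmas~\ref{Dempty}--\ref{Dnonempty} to evaluate $c_{1n},c_{n+1,n},c_{n+2,n}$ on $D_S(x_n)=\{x_1,x_{n+1},x_{n+2}\}$, and $\sum_{x_s\mid x_n}c_{sn}=0$ from Lemma~\ref{sumcijzeroLemma}, I would record the normalization $c_{1n}+c_{n+1,n}+c_{n+2,n}=n-3$ and note $c_{n+1,n},c_{n+2,n}>0$ (each $x_{n+j}$ is a gcd of at least two greatest-type divisors). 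Writing $\beta_n$ for the numerator, the goal becomes $0<\beta_n<\alpha_n$, where $\alpha_n=\sum_{x_s\mid x_n}x_s c_{sn}$ by Lemma~\ref{Lemmacijalphak}.

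Since the labelling of $x_2$ is free, I would split on whether some greatest-type divisor is divisible by both $x_{n+1}$ and $x_{n+2}$. In \emph{Case A} (a common multiple exists, which includes the case $x_{n+1}\mid x_{n+2}$ as well as the incomparable case with a shared multiple), I would pick $x_2$ to be such a greatest-type divisor, so that $[x_2,x_1]=[x_2,x_{n+1}]=[x_2,x_{n+2}]=x_2$ and the numerator collapses to $\gamma_n:=x_n-\sum_{i=2}^{n-1}[x_2,x_i]+x_2(n-3)$, exactly the quantity appearing in Theorem~\ref{DSxnchain}. In \emph{Case B} (no common multiple), $x_{n+1}$ and $x_{n+2}$ are necessarily incomparable; I would choose $x_2$ divisible by $x_{n+1}$ (possible as $l_{n+1}\ge2$), so that $x_{n+2}\nmid x_2$. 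Since $(x_2,x_{n+2})$ is a gcd of greatest-type divisors it lies in $D_S(x_n)$ and divides $x_2$, which forces $(x_2,x_{n+2})=x_1$, and one computes $\beta_n=\gamma_n+([x_2,x_{n+2}]-x_2)c_{n+2,n}>\gamma_n$.

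In both cases the lower bound $\gamma_n>0$ follows as in Theorem~\ref{DSxnchain}: Lemma~\ref{AliLemma} writes $[x_2,\dots,x_{n-1}]$ as $\prod_{i=2}^{n-1}x_i$ divided by a product of $D_S(x_n)$-powers whose exponents sum to $n-3$, and comparing with $\sum_{i=3}^{n-1}[x_2,x_i]$ produces a factor $2^{n-4}-(n-3)\ge0$, valid since $n\ge5$. For the upper bound in Case A, each pairwise gcd $(x_2,x_i)$ with $3\le i\le n-1$ lies in $D_S(x_n)$, and the partition of $\{3,\dots,n-1\}$ according to the value $(x_2,x_i)=x_s$ has exactly $c_{sn}$ members for each $s\in\{1,n+1,n+2\}$ (a claim I would verify block by block as in Theorem~\ref{DSxnchain}); regrouping then yields
\[
\alpha_n-\gamma_n=\sum_{s\in\{1,n+1,n+2\}}\ \sum_{(x_2,x_i)=x_s}\Big(\tfrac{x_2}{x_s}-1\Big)(x_i-x_s),
\]
a sum of strictly positive terms, so $0<\gamma_n<\alpha_n$.

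The crux is the upper bound in Case B, where this clean cancellation breaks down: because $x_{n+2}\nmid x_2$ the block counts no longer match $c_{1n}$ and $c_{n+2,n}$, and regrouping leaves the \emph{negative} remainder $c_{n+2,n}(x_2-x_1)\big(1-\tfrac{x_{n+2}}{x_1}\big)$ alongside the positive telescoped sums over the blocks $(x_2,x_i)=x_1$ and $(x_2,x_i)=x_{n+1}$. The main obstacle is to show the positive part dominates this remainder. The decisive observation I would exploit is that every greatest-type divisor divisible by $x_{n+2}$ satisfies $(x_2,x_i)=x_1$, so $B_{n+2}:=D_{n+2}\cap G_S(x_n)$ lies entirely inside the block $\{(x_2,x_i)=x_1\}$; since $|B_{n+2}|=l_{n+2}=c_{n+2,n}+1$ and each such $x_i$ exceeds $x_{n+2}$, the contribution of $B_{n+2}$ alone, exceeding $(c_{n+2,n}+1)\tfrac{(x_2-x_1)(x_{n+2}-x_1)}{x_1}$, already outweighs the negative remainder of size $c_{n+2,n}\tfrac{(x_2-x_1)(x_{n+2}-x_1)}{x_1}$. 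This gives $\alpha_n-\beta_n>0$, completing $0<\beta_n<\alpha_n$ and hence $U_{2n}\notin\mathbb{Z}$.
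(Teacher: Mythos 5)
Your proposal is correct and takes essentially the same route as the paper: you bound the same entry $U_{2n}$ of $[S](S)^{-1}$ strictly between $0$ and $1$ via Lemmas~\ref{BourqueLighLemma}--\ref{AliLemma}, with your Case~A coinciding with the paper's subcase $G_S(x_n)\cap D_{n+1}\cap D_{n+2}\neq\emptyset$ (plus the divisor-chain situation, which the paper instead dispatches by citing Theorem~\ref{DSxnchain}) and your Case~B coinciding with its disjoint subcase, including the same block counts $c_{1n}$, $c_{n+1,n}$, $c_{n+2,n}$ and the same regrouping of $\alpha_n-\beta_n$ by the value of $(x_2,x_i)$. The only difference is cosmetic bookkeeping in Case~B: you carry an explicitly negative remainder of size $c_{n+2,n}\frac{(x_2-x_1)(x_{n+2}-x_1)}{x_1}$ and dominate it by the $l_{n+2}$ terms coming from $D_{n+2}\cap G_S(x_n)$, whereas the paper re-pairs those same terms against $x_{n+2}$ so that every summand is manifestly positive --- an algebraically equivalent rearrangement of the identical decomposition.
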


\begin{proof}
	If $D_S(x_n)$ is a divisor chain then the proof is a direct consequence of Theorem~\ref{DSxnchain}. Now, let $D_S(x_n)$ be a $x_1-$set, namely $(x_{n+1}, x_{n+2})=x_1$. We will prove the claim of the theorem in two cases as the set $(G_S(x_n) \cap D_{n+1}) \cap (G_S(x_n) \cap D_{n+2})$ can be empty or a singleton subset of $G_S(x_n)$. 
	
	Now, let $G_S(x_n) \cap D_{n+1} \cap D_{n+2} \neq \emptyset $. The set $G_S(x_n) \cap D_{n+1} \cap D_{n+2}$ cannot have more than one element. Suppose the contrary, that is, $x_i,x_j \in G_S(x_n) \cap D_{n+1} \cap D_{n+2}$. Since $S$ is gcd-closed and $D_S(x_n)=\{x_1,x_{n+1}, x_{n+2} \}$, we have $(x_i,x_j)=x_{n+1}$ or $x_{n+2}$. Now, assume that $(x_i,x_j)=x_{n+1}$. On the other hand, $x_{n+2} \mid (x_i,x_j)$  since $x_i,x_j \in D_{n+2}$. Then, we have $x_{n+2} \mid x_{n+1}$, a contradiction. Thus, we can assume that $G_S(x_n) \cap D_{n+1} \cap D_{n+2}=\{x_2\}$ without loss of generality. We will show that $U_{2n} \notin \mathbb{Z}$. By Lemmas \ref{FHZLEMMA}-\ref{cijzeroLemma}, it is clear that 
	$$
	c_{sn}=\left\lbrace 
	\begin{array}{cl}
	n-l_{n+1}-l_{n+2}-1  &  \text{if }  s=1,\\ 
	-1 &   \text{if }  2\leq s\leq  n-1, \\ 
	1 &   \text{if }  s=n, \\ 
	l_{n+1}-1 &   \text{if }   s =n+1, \\
	l_{n+2}-1 &   \text{if }   s =n+2, \\ 
	0  &  \text{if }  s>n+2.
	\end{array}
	\right. 
	$$
Thus, by Lemma~\ref{BourqueLighLemma}, we have
\begin{eqnarray*}
U_{2n}&=&\frac{1}{\alpha_n} 
\bigg( x_n-\sum_{i=2}^{n-1} [x_2,x_i] + [x_2,x_{n+1}](l_{n+1}-1)+[x_2,x_{n+2}](l_{n+2}-1)  \\
& &  +[x_2,x_1] (n-l_{n+1}-l_{n+2}-1) \bigg) 
\end{eqnarray*}
Since $x_2$ is a multiple of $\text{lcm}(D_S(x_n))$, by Lemma~\ref{Lemmacijalphak}, we have
\begin{equation*}
U_{2n}= \frac{x_n-\sum_{i=2}^{n-1} [x_2,x_i] + x_2(n-3)}{\alpha_n}, 
\end{equation*}
where
\begin{equation*}
\alpha_n = x_n-\sum_{i=2}^{n-1} x_i + x_{n+1}(l_{n+1}-1)+x_{n+2}(l_{n+2}-1) + x_1 (n-l_{n+1}-l_{n+2}-1).
\end{equation*} 
Let 
$$
\gamma_n = x_n-\sum_{i=2}^{n-1} [x_2,x_i] + x_2(n-3).
$$ 
Using the same method as in the proof of Theorem~\ref{DSxnchain}, one can easily show that $\gamma_n $ is positive and $| \{ x_i\in G_S(x_n) : x_k=(x_2,x_i)\}|=c_{kn}$ for $k=1,n+1,n+2$. So, it is sufficient to show that $ \alpha_n - \gamma_n $ is positive. To do this, we write $\alpha_n - \gamma_n$ as follows:
\begin{eqnarray*}
\alpha_n - \gamma_n &=& \sum_{  (x_2,x_i)=x_{n+1} \atop x_i\in G_S(x_n)} ([x_2,x_i]-x_i+(x_{n+1}-x_2)) \\ 
& & +\sum_{(x_2,x_i)=x_{n+2} \atop x_i\in G_S(x_n)} ([x_2,x_i]-x_i+(x_{n+2}-x_2)) \\
& & +\sum_{(x_2,x_i)=x_{1} \atop x_i\in G_S(x_n)} ([x_2,x_i]-x_i+(x_{1}-x_2)) \\
&=& \sum_{  (x_2,x_i)=x_{n+1} \atop x_i\in G_S(x_n)} (\frac{x_2}{x_{n+1}}-1)(x_i-x_{n+1}) \\ 
& & +\sum_{(x_2,x_i)=x_{n+2} \atop x_i\in G_S(x_n)} (\frac{x_2}{x_{n+2}}-1)(x_i-x_{n+2}) \\
& & +\sum_{(x_2,x_i)=x_{1} \atop x_i\in G_S(x_n)} (\frac{x_2}{x_{1}}-1)(x_i-x_{1}). 
\end{eqnarray*}
Then, it is clear that $\alpha_n -\gamma_n > 0$. 

Now, we investigate the case $[D_{n+1}\cap D_{n+2}]\cap G_S(x_n)=\emptyset$. Without loss of generality, we can assume that $D_{n+1}\cap G_S(x_n)=\{x_2,\ldots , x_{k+1}\}$ and $D_{n+2}\cap G_S(x_n)=\{x_{k+2},\ldots , x_{k+s+1}\}$. In this case, by Lemmas~\ref{BourqueLighLemma},~\ref{FHZLEMMA} - \ref{cijzeroLemma}, we have
\begin{eqnarray*}
U_{2n}&=& \frac{1}{\alpha_n}  \bigg(x_n-\sum_{i=2}^{n-1} [x_2,x_i] + [x_2,x_{n+1}](k-1)+[x_2,x_{n+2}](s-1) \\
& & +[x_2,x_{1}](n-k-s-1)\bigg).
\end{eqnarray*}
Also, by Lemma~\ref{Lemmacijalphak},
$$
\alpha_n =x_n-\sum_{i=2}^{n-1} x_i + (k-1)(x_{n+1})+(s-1)x_{n+2}+(n-k-s-1)x_1.
$$
Let 
\begin{equation*}
\gamma_n := x_n-\sum_{i=2}^{n-1} [x_2,x_i] + [x_2,x_{n+1}](k-1)+[x_2,x_{n+2}](s-1)+[x_2,x_{1}](n-k-s-1).
\end{equation*}
Using a similar method as in the proof of Theorem~\ref{DSxnchain}, one can show that $\gamma_n > 0$. Now, we will prove that $\alpha_n-\gamma_n$ is positive. 
 
\begin{eqnarray*}
\alpha_n-\gamma_n & = &
(k-1)(x_{n+1}-x_2)+\sum_{i=3}^{k+1}\left( [x_2,x_i]-x_i\right) \\
&& +(s-1)(x_{n+2}-[ x_2,x_{n+2}])+\sum_{i=k+2}^{k+s+1}\left( [x_2,x_i]-x_i\right) \\
&& +(n-k-s-1)(x_1-x_2)+\sum_{i=k+s+2}^{n-1}\left( [x_2,x_i]-x_i\right)\\
& = & \sum_{i=3}^{k+1}\left( [x_2,x_i]-x_i+(x_{n+1}-x_2)\right)\\
& & +\sum_{i=k+2}^{k+s+1}\left( [x_2,x_i]-x_i+(x_{n+2}-x_2)\right) \\
& & +\sum_{i=k+s+2}^{n-1}\left( [x_2,x_i]-x_i+(x_1-x_2)\right)\\
& & +[x_2,x_{n+2}]-x_{n+2}+x_1-x_2\\
&=& \sum_{  i=3}^{k+1} (\frac{x_2}{x_{n+1}}-1)(x_i-x_{n+1}) +\sum_{  i=k+2}^{k+s+1} (\frac{x_2}{x_{1}}-1)(x_i-x_{n+2}) \\
& & +\sum_{  i=k+s+2}^{n-1} (\frac{x_2}{x_{1}}-1)(x_i-x_{1}) + (\frac{x_2}{x_{1}}-1)(x_{n+2}-x_{1})\\
& >& 0.
\end{eqnarray*}
This completes the proof.
\end{proof}
After the proof of Theorems~\ref{DSxn1}-\ref{dsxn3}, we can say that Zhao's approach works when $x_n$ is a maximal element of $S$ with respect to the divisibility relation. Does the same method work if $S$ contains some multiples of $x_n$? It appears to be difficult to answer this question without the following lemma.

\begin{lem} \label{toplemma}
	Let $S=\{x_1,x_2,\ldots ,x_m\}$ such that $i\leq j$ whenever $x_i\mid x_j$. Also, let $x_n \in S$ and $D_n\cup \{x_n\}=\{x_n=x_{n_1},\ldots ,x_{n_t}\}$. Then, for each $1\leq q \leq m $, 
	$$
	\sum_{i=1}^{t}U_{qn_i}=\sum_{s=1}^{m}[x_q,x_s] \frac{c_{sn}}{\alpha_n}.
	$$
\end{lem}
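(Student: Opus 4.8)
The plan is to expand the entries of $U=[S](S)^{-1}$ by the Bourque--Ligh formula, sum them over the multiples of $x_n$, and watch everything collapse onto the single column $n$. Writing $(S)^{-1}=(w_{sj})$, Lemma~\ref{BourqueLighLemma} with $e=1$ gives
$$U_{qn_i}=\sum_{s=1}^{m}[x_q,x_s]\,w_{sn_i},\qquad w_{sn_i}=\sum_{x_s\mid x_l,\ x_{n_i}\mid x_l}\frac{c_{sl}\,c_{n_il}}{\alpha_l}.$$
Summing over $i=1,\dots,t$ and interchanging the order of summation, the contribution of a fixed $l$ appears multiplied by $\sum_{i:\,x_{n_i}\mid x_l}c_{n_il}$. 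Since $\{x_{n_1},\dots,x_{n_t}\}=D_n\cup\{x_n\}$ is precisely the set of multiples of $x_n$ lying in $S$, this inner factor equals $\sum_{x_n\mid x_a\mid x_l}c_{al}$, so the whole problem reduces to evaluating this partial sum of the $c$-coefficients.

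The heart of the argument is the identity
$$\sum_{x_n\mid x_a\mid x_l}c_{al}=\delta_{nl}\qquad(1\le n,l\le m),$$
which is a localized version of Lemma~\ref{sumcijzeroLemma} (the case $x_n=x_1$) and amounts to saying that $(c_{al})$ is the M\"obius function of the poset $(S,\mid)$. I would establish it by the same inclusion--exclusion device used for Lemma~\ref{sumcijzeroLemma}. The cases $x_n\nmid x_l$ (empty sum) and $x_l=x_n$ (only $c_{nn}=1$ remains) are immediate, so assume $x_n\mid x_l$ with $x_n\ne x_l$ and put $G_S(x_l)=\{y_1,\dots,y_{m'}\}$. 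Feeding~(\ref{cij2}) into each $c_{al}$ and summing over $a$, the leading term $\sum_{x_n\mid x_a\mid x_l}\sum_{d\mid x_l/x_a}\mu(d)$ equals $1$ because only $x_a=x_l$ survives; and in the $r$-th term the inner M\"obius sum $\sum_{d\mid(y_{i_1},\dots,y_{i_r})/x_a}\mu(d)$ is nonzero only when $x_a=(y_{i_1},\dots,y_{i_r})$, which must then be a multiple of $x_n$. Hence an $r$-subset of $G_S(x_l)$ contributes $(-1)^r$ exactly when all of its members are divisible by $x_n$, and with $p:=|\{\,y\in G_S(x_l):x_n\mid y\,\}|$ the sum becomes $\sum_{r\ge0}(-1)^r\binom{p}{r}=(1-1)^p=0^{\,p}$.

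It remains to see that $p\ge1$ whenever $x_n$ is a proper divisor of $x_l$: the set $\{z\in S:x_n\mid z\mid x_l,\ z<x_l\}$ is nonempty (it contains $x_n$), and any maximal element of it is a greatest-type divisor of $x_l$ divisible by $x_n$. With $p\ge1$ we get $0^{\,p}=0$, which proves the displayed identity. Substituting $\sum_{i:\,x_{n_i}\mid x_l}c_{n_il}=\delta_{nl}$ back shows that only the index $l=n$ survives, so $\sum_{i=1}^{t}w_{sn_i}=c_{sn}/\alpha_n$ (this holds even when $x_s\nmid x_n$, since then $c_{sn}=0$), and therefore
$$\sum_{i=1}^{t}U_{qn_i}=\sum_{s=1}^{m}[x_q,x_s]\,\frac{c_{sn}}{\alpha_n},$$
as required. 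The main obstacle is this partial-sum identity, and inside it the combinatorial observation that a proper divisor always lies beneath some greatest-type divisor; once that is in place, the remainder is routine bookkeeping of the Bourque--Ligh expansion.
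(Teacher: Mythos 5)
Your proof is correct, and while its outer skeleton matches the paper's, it reaches the key cancellation by a genuinely different route. Both arguments begin the same way: expand $U=[S](S)^{-1}$ via Lemma~\ref{BourqueLighLemma}, interchange the summations, and reduce everything to showing that the inner factor $\sum_{i:\,x_{n_i}\mid x_k}c_{n_ik}$ equals $1$ for $x_k=x_n$ and $0$ for $x_k\in D_n$. The paper handles the vanishing structurally: it proves a restriction-invariance statement, $(c_{n_ik})_S=(c_{n_ik})_{D_n\cup\{x_n\}}$ --- the coefficients computed in $S$ agree with those computed in the smaller gcd-closed set $D_n\cup\{x_n\}$, because greatest-type divisors of $x_{n_j}$ not divisible by $x_{n_i}$ contribute nothing to (\ref{cij2}) --- and then quotes Lemma~\ref{sumcijzeroLemma} verbatim inside $D_n\cup\{x_n\}$, whose least element is $x_n$. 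You instead prove the localized identity $\sum_{x_n\mid x_a\mid x_l}c_{al}=\delta_{nl}$ directly in $S$, by rerunning the inclusion-exclusion computation from the proof of Lemma~\ref{sumcijzeroLemma} under the constraint $x_n\mid x_a$: since $S$ is gcd-closed, the M\"{o}bius sum attached to an $r$-subset of $G_S(x_l)$ survives exactly when its gcd is a multiple of $x_n$, i.e.\ when every member is, giving $\sum_{r\geq 0}(-1)^r\binom{p}{r}=0^p$ with $p=|\{y\in G_S(x_l):x_n\mid y\}|$. The one genuinely new ingredient your route requires --- that $p\geq 1$ whenever $x_n$ properly divides $x_l$ --- you supply correctly by taking a maximal element of $\{z\in S:x_n\mid z\mid x_l,\ z<x_l\}$ and checking it is a greatest-type divisor of $x_l$. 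What each approach buys: yours is more self-contained (no transfer between two gcd-closed sets is needed, and Lemma~\ref{sumcijzeroLemma} becomes the special case $x_n=x_1$ of your identity), and it makes explicit that $(c_{al})$ satisfies the defining recursion of the M\"{o}bius function of the poset $(S,\mid)$; the paper's version, at the cost of the extra invariance lemma, isolates the reusable fact that $c_{n_ik}$ depends only on the upper set $D_n\cup\{x_n\}$. Your minor bookkeeping points --- dropping the constraint $x_s\mid x_l$ because $c_{sn}=0$ when $x_s\nmid x_n$, and the degenerate cases $x_n\nmid x_l$ (empty sum) and $x_l=x_n$ --- are all handled correctly.
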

\begin{proof}
	By Lemma \ref{BourqueLighLemma}, we have
	\begin{eqnarray*}
		\sum_{i=1}^{t}U_{qn_i} &=& \sum_{i=1}^{t}\left[\sum_{s=1}^{m}[x_q,x_s]\sum_{x_s\mid x_k \atop x_{n_i}\mid x_k}c_{sk}\frac{c_{n_ik}}{\alpha_k}\right] \\
		&=& \sum_{s=1}^{m}[x_q,x_s]\sum_{i=1}^{t}\sum_{k=n}^{m} c_{sk} \frac{c_{n_ik}}{\alpha_k} \\
		&=& \sum_{k=n}^{m}\sum_{s=1}^{m}[x_q,x_s]\frac{c_{sk}}{\alpha_k}\sum_{i=1}^{t}c_{n_ik} \\
		&=& \sum_{s=1}^{m}[x_q,x_s]\frac{c_{sn}}{\alpha_n}\sum_{i=1}^{t}c_{n_in}+\sum_{k=n+1}^{m}\sum_{s=1}^{m}[x_q,x_s]\frac{c_{sk}}{\alpha_k}\sum_{i=1}^{t}c_{n_ik} 
	\end{eqnarray*}
	Here $\sum_{i=1}^{t}c_{n_in}=c_{nn}=1$ and $\sum_{i=1}^{t}c_{n_ik}=\sum_{x_{n_i}\mid x_k}c_{n_ik}$.
	The last sum is over $x_{n_i}\in D_n\cup \{x_n\}$ dividing the fixed $x_k\in D_n.$
	Since $S$ is gcd-closed, $D_n\cup \{x_n\}$ is also gcd-closed.
	Now, let $(c_{ij})_A$ denote $c_{ij}$ for a gcd-closed set $A$, as defined in Lemma~\ref{BourqueLighLemma}.
	We want to show that $(c_{n_ik})_S=(c_{n_ik})_{D_n\cup \{x_n\}}.$
	Let $G_S(x_{n_j})=\{y_{j,1},\ldots ,y_{j,k}\}$. By Lemma~\ref{Lemmacijalphak},
	
	\begin{equation} \label{cijdifsets}
		(c_{n_in_j})_S=\sum_{d\mid \frac{x_{n_j}}{x_{n_i}}}\mu (d)+\sum_{r=1}^{k}(-1)^r\sum_{1\leq i_1<\ldots <i_r\leq k}\sum_{d\mid \frac{(y_{j,i_1},\ldots ,y_{j,i_r})}{x_{n_i}}}\mu (d).	
	\end{equation}
	Without loss of generality, let $x_{n_i}\nmid y_{j,k}$. Then $x_{n_i}\nmid (y_{j,i_1},\ldots ,y_{j,i_{r-1}},y_{j,k})$, and hence $(y_{j,i_1},\ldots ,y_{j,i_{r-1}},y_{j,k})/x_{n_i}\notin \mathbb{Z}.$
	So, if $y_{j,k}\in \{y_{j,i_1},\ldots ,y_{j,i_r}\}$,  then the summation  $\sum_{d\mid (y_{j,i_1},\ldots ,y_{j,i_r})/x_{n_i}}\mu (d)$ is empty, and hence it is equal to zero.
	Thus, letting $G_S(x_{n_j})\cap (D_{n_i}\cup \{x_{n_i}\})=\{y_{j,1},\ldots ,y_{j,u}\}$
	without loss of generality, we can write (\ref{cijdifsets}) as follows
	\begin{equation*}
		(c_{n_in_j})_S=\sum_{d\mid \frac{x_{n_j}}{x_{n_i}}}\mu (d)+\sum_{r=1}^{u}(-1)^r\sum_{1\leq i_1<\ldots <i_r\leq u}\sum_{d\mid \frac{(y_{j,i_1},\ldots ,y_{j,i_r})}{x_{n_i}}}\mu (d).
	\end{equation*}
	On the other hand, it is clear that $G_S(x_{n_j})\cap (D_{n_i}\cup \{x_{n_i}\})\subset G_S(x_{n_j})\cap (D_{n}\cup \{x_{n}\})$ and $G_{D_{n}\cup \{x_{n}\}}(x_{n_j})=G_S(x_{n_j})\cap (D_{n}\cup \{x_{n}\}).$
	Thus, we obtain $(c_{n_in_j})_S=(c_{n_in_j})_{D_{n}\cup \{x_{n}\}}.$

	Now, since $D_{n}\cup \{x_{n}\}$ is a gcd-closed set and $(c_{n_ik})_{D_{n}\cup \{x_{n}\}}=(c_{n_ik})_S$, we have $\sum_{i=1}^{t}c_{n_ik}=0$ by Lemma~\ref{sumcijzeroLemma} for $x_k\in D_n.$
	Thus, $$
	\sum_{i=1}^{t}U_{qn_i}=\sum_{s=1}^{m}[x_q,x_s] \frac{c_{sn}}{\alpha_n}.
	$$
\end{proof}

Putting Theorems~\ref{DSxn1}-\ref{dsxn3} and Lemma~\ref{toplemma} together, we have the following result.
\begin{thm} \label{maintheorem}
	Let $S=\{x_1,x_2,\ldots ,x_m\}$ and let $S$ have an element $x$ with $|G_S(x)|\geq 3$. If $D_S(x)$ is a divisor chain or $|D_S(x)|\leq 3$, then $(S)\nmid [S]$.
\end{thm}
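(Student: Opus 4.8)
The plan is to reduce the general situation to the case, already settled in Theorems~\ref{DSxn1}--\ref{dsxn3}, in which the distinguished element is maximal in $S$ with respect to divisibility. Write $x=x_n$ and relabel $S$ so that $i\leq j$ whenever $x_i\mid x_j$ (as required by Lemma~\ref{toplemma}), that $G_S(x_n)=\{x_2,\ldots,x_{n-1}\}$ with $n\geq 5$ (possible since $|G_S(x_n)|\geq 3$), and that $x_1=\gcd(G_S(x_n))$; let $q$ denote the index of the particular greatest-type divisor of $x_n$ used to form the entry $U_{qn}$ in the proof of whichever of Theorems~\ref{DSxn1}--\ref{dsxn3} corresponds to $D_S(x_n)$. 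First I would dispose of the case analysis on $D_S(x_n)$. Since $x_1=\gcd(G_S(x_n))$ is the least element of $D_S(x_n)$ under divisibility and always lies in $D_S(x_n)$, the cases $|D_S(x_n)|=1$ and $|D_S(x_n)|=2$ are automatically divisor chains (the former is exactly the situation of Theorem~\ref{DSxn1}), while $|D_S(x_n)|=3$ is either a divisor chain or consists of $x_1$ together with two incomparable elements whose gcd is $x_1$, i.e.\ the $x_1$-set treated in Theorem~\ref{dsxn3}. Hence the hypothesis ``$D_S(x)$ is a divisor chain or $|D_S(x)|\leq 3$'' is precisely the union of the hypotheses of Theorems~\ref{DSxn1}--\ref{dsxn3}.

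Next I would apply Lemma~\ref{toplemma} with this $q$ to obtain
\[
\sum_{i=1}^{t}U_{qn_i}=\sum_{s=1}^{m}[x_q,x_s]\,\frac{c_{sn}}{\alpha_n},
\]
where $D_n\cup\{x_n\}=\{x_n=x_{n_1},\ldots,x_{n_t}\}$. The crucial observation is that the right-hand side is exactly the quantity that the proofs of Theorems~\ref{DSxn1}--\ref{dsxn3} compute as $U_{qn}$ in the maximal case and bound strictly between $0$ and $1$. Indeed, every value $c_{sn}$ (through Lemmas~\ref{Lemmacijalphak}, \ref{Dempty} and \ref{cijzeroLemma}) and the number $\alpha_n$ (through Lemma~\ref{Lemmacijalphak}) are determined solely by the divisors of $x_n$, namely by $G_S(x_n)$, $D_S(x_n)$ and the counts $l_i=|D_i\cap G_S(x_n)|$, all of which involve only elements lying below $x_n$. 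Adjoining proper multiples of $x_n$ to $S$ therefore changes none of these numbers, so the displayed right-hand side equals the very expression $\gamma_n/\alpha_n$ (respectively $\beta_n/\alpha_n$) shown to lie in $(0,1)$ in the relevant theorem, independently of whether $x_n$ is maximal.

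Consequently $\sum_{i=1}^{t}U_{qn_i}\in(0,1)$ is not an integer, so at least one summand $U_{qn_i}$ fails to be an integer; thus $U=[S](S)^{-1}\notin M_m(\mathbb{Z})$ and $(S)\nmid[S]$. The one point demanding care — and the step I expect to be the genuine obstacle — is the invariance claim of the previous paragraph: I must verify that the formulas for $c_{sn}$ and $\alpha_n$, and in particular the counts $l_i$, really are unaffected by the presence of proper multiples of $x_n$, so that the strict inequalities $0<\gamma_n<\alpha_n$ (and their analogues $0<\beta_n<\alpha_n$) established under the maximality assumption carry over verbatim. Once that invariance is secured, Lemma~\ref{toplemma} does all the remaining work of transporting the non-integrality from the maximal setting to the general one, and the theorem follows by the case split above.
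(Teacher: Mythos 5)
Your proposal is correct and follows essentially the same route as the paper: the paper's proof likewise reduces to the maximal case by applying Lemma~\ref{toplemma} (with $q=2$, a greatest-type divisor of $x_n$) and then invokes the estimates $0<\gamma_n<\alpha_n$ from Theorems~\ref{DSxnchain} and~\ref{dsxn3} to place $\sum_{i}U_{2n_i}=\sum_s [x_2,x_s]c_{sn}/\alpha_n$ strictly in $(0,1)$. The invariance point you flag as the crucial step — that $c_{sn}$, $\alpha_n$ and the counts $l_i$ depend only on the divisors of $x_n$ in $S$, so elements not dividing $x_n$ are irrelevant to the right-hand side — is exactly what the paper leaves implicit in its phrase ``by similar methods,'' and your justification of it is sound.
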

\begin{proof}
	Without loss of generality, we can assume that $x_n \in S$ such that $5\leq n \leq m$, $ G_S(x_n)=\{x_2,\ldots ,x_{n-1}\}$, and $\text{gcd}(G_S(x_n))=x_1$. Also, let $D_{n}\cup \{x_{n}\}=\{x_n=x_{n_1},\ldots x_{n_t}\}$. By Lemma~\ref{toplemma}, we have
	$$
	\sum_{i=1}^{t}U_{2n_i}=\sum_{s=1}^{m}[x_2,x_s]\frac{c_{sn}}{\alpha_n}.
	$$ 
We have two cases that $D_S(x_n)$ could be a divisor chain or not. In both cases, one can show that $\sum_{s=1}^{m}[x_2,x_s].c_{sn}/\alpha_n\notin \mathbb{Z}$ by similar methods to the proofs of Theorems~\ref{DSxnchain} and \ref{dsxn3}, respectively. 
\end{proof}

So far, we have proven that if $S$ has an element $x$ such that $|G_S(x)|\geq 3$, and $|D_S(x)|\leq 3$ or $D_S(x)$ is a divisor chain, then the divisibility does not hold. On the other hand, for the complete solution of  Problem~\ref{HongProblem} for $|S| \leq 8$, whether the divisibility holds when $S$ has an element $x$ such that $|G_S(x)|=3$ and $|D_S(x)| = 4 $ remains unsolved. The following condition is a key to the divisibility for this case. For $x\in S$, we say that $x$ satisfies the condition $\mathfrak{M}$ if $[x_{i},x_{j}]=x$ for all different $x_{i},x_{j}\in G_S(x)$ when $|G_S(x)|\geq 2$. Also, we say that the set $S$ satisfies the condition $\mathfrak{M}$ if each element $x \in S$ with $|G_S(x) | \geq 2$ satisfies the condition $\mathfrak{M}$. Recall that the condition $\mathfrak{C}$ is defined for the elements with only two greatest-type divisors. If $x\in S$ satisfies the condition $\mathfrak{C}$, then it clearly satisfies the condition $\mathfrak{M}$. On the other hand, an element satisfying the condition $\mathfrak{M}$ need not satisfy the condition $\mathfrak{C}$. 

\begin{thm} \label{G3D4}
Let $S=\{x_1,x_2,\ldots ,x_8\}$, $|G_S(x_8)|=3$ and $|D_S(x_8)|=4$. Then, $(S)\mid [S]$ if and only if $S$ satisfies the condition $\mathfrak{M}$.  
\end{thm}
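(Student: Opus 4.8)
The plan is to first pin down the combinatorial structure that the hypotheses force on $S$, and then split the proof into a factorization argument for sufficiency and an interval-bounding argument (in the spirit of Zhao) for necessity. Writing $G_S(x_8)=\{y_1,y_2,y_3\}$, $a=(y_1,y_2)$, $b=(y_1,y_3)$, $c=(y_2,y_3)$ and $d=(y_1,y_2,y_3)$, I would first observe that $D_S(x_8)=\{a,b,c,d\}$; since $|D_S(x_8)|=4$ these are four distinct elements which, together with $y_1,y_2,y_3,x_8$, account for all eight elements of $S$. Gcd-closedness forces $(a,b)=(a,c)=(b,c)=d$, so $A:=a/d$, $B:=b/d$, $C:=c/d$ are pairwise coprime integers $\ge2$, and one checks $G_S(y_1)=\{a,b\}$, $G_S(y_2)=\{a,c\}$, $G_S(y_3)=\{b,c\}$ while $G_S(a)=G_S(b)=G_S(c)=\{d\}$. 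As the only elements with $|G_S|\ge2$ are $y_1,y_2,y_3,x_8$, condition $\mathfrak{M}$ reduces to $[a,b]=y_1$, $[a,c]=y_2$, $[b,c]=y_3$ and $[y_i,y_j]=x_8$; writing $y_1=dABu_1$, $y_2=dACu_2$, $y_3=dBCu_3$ and $x_8=dABCw$ (each a multiple as indicated), $\mathfrak{M}$ is equivalent to $u_1=u_2=u_3=w=1$.

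For \emph{sufficiency}, under $\mathfrak{M}$ the set is $S=\{\,dA^{\varepsilon_1}B^{\varepsilon_2}C^{\varepsilon_3}:(\varepsilon_1,\varepsilon_2,\varepsilon_3)\in\{0,1\}^3\,\}$, and since $\gcd$ and $\mathrm{lcm}$ act coordinatewise on the exponents while $A,B,C$ are coprime, both matrices factor in the same ordering as Kronecker products $(S)=d\,(M_A\otimes M_B\otimes M_C)$ and $[S]=d\,(N_A\otimes N_B\otimes N_C)$, where $M_X=\bigl(\begin{smallmatrix}1&1\\1&X\end{smallmatrix}\bigr)$ and $N_X=\bigl(\begin{smallmatrix}1&X\\X&X\end{smallmatrix}\bigr)$. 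The scalar $d$ then cancels, giving $(S)^{-1}[S]=(M_A^{-1}N_A)\otimes(M_B^{-1}N_B)\otimes(M_C^{-1}N_C)$, and since $M_X^{-1}N_X=\bigl(\begin{smallmatrix}0&X\\1&0\end{smallmatrix}\bigr)\in M_2(\mathbb{Z})$, the Kronecker product is integral, so $(S)\mid[S]$.

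For \emph{necessity} I would argue the contrapositive with $U=[S](S)^{-1}$, reading the needed $c_{sn}$ off Lemmas~\ref{FHZLEMMA},~\ref{Dempty} and~\ref{sumcijzeroLemma} and the $\alpha_n$ off Lemma~\ref{honglemma}. If some $u_i\neq1$, say $u_1\ge2$, then $y_1$ is not maximal (it has the multiple $x_8$), so I apply Lemma~\ref{toplemma} with $D_{y_1}\cup\{y_1\}=\{y_1,x_8\}$ and $q=a$ to collapse the two denominators into $\alpha_{y_1}$, obtaining
\[
U_{a,y_1}+U_{a,x_8}=\frac{AB(u_1-1)}{(A-1)(B-1)+AB(u_1-1)}\in(0,1),
\]
so the two entries cannot both be integers. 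If instead $u_1=u_2=u_3=1$ but $w\ge2$, then $x_8$ is maximal and a direct computation yields
\[
U_{y_1,x_8}=\frac{ABC(w-1)}{(A-1)(B-1)(C-1)+ABC(w-1)}\in(0,1).
\]
These two cases exhaust every way $\mathfrak{M}$ can fail, so $(S)\nmid[S]$ in each, completing the proof.

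The rows indexed by $d,a,b,c$ (all with $|G_S|=1$) never obstruct integrality, so everything concentrates in the rows of the $y_i$ and of $x_8$; the genuinely new difficulty is the $x_8$-row. Because $a,b,c$ are pairwise incomparable, $D_S(x_8)$ is neither a divisor chain nor of size $\le3$, so Theorems~\ref{DSxn1}--\ref{dsxn3} do not apply. The crux is the explicit evaluation of $U_{y_1,x_8}$ together with the identity $ABC-AB-AC-BC+A+B+C-1=(A-1)(B-1)(C-1)$, which forces the denominator strictly above the numerator and traps the entry inside $(0,1)$; a secondary subtlety is that the $y_i$ are non-maximal, so their rows must be handled through Lemma~\ref{toplemma} rather than through a single matrix entry.
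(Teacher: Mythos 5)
Your proposal is correct, and on the necessity direction it follows the paper's own proof almost line for line: the paper's Case~1 uses Lemma~\ref{toplemma} to compute $U_{25}+U_{28}=\frac{x_5-[x_2,x_3]}{\alpha_5}$, which is exactly your $U_{a,y_1}+U_{a,x_8}$, and its Case~2 computes $U_{58}=\frac{x_8-[x_5,x_6]-[x_5,x_7]+[x_4,x_5]}{\alpha_8}$, which is your $U_{y_1,x_8}$; your coprime parametrization $a=dA$, $b=dB$, $c=dC$, $y_1=dABu_1,\ldots,x_8=dABCw$ merely replaces the paper's inequality bookkeeping ($\gamma_8>0$ from $x_8\geq 2[x_5,x_6,x_7]$, and $\alpha_8-\gamma_8=\left(\frac{x_4}{x_1}-1\right)\left(\frac{x_3}{x_1}-1\right)(x_2-x_1)>0$) by the equivalent closed forms, whose denominators exceed their numerators by $d(A-1)(B-1)$ and $d(A-1)(B-1)(C-1)$ respectively. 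Where you genuinely depart is the sufficiency half: the paper only asserts that ``by a direct computation'' one gets $U_{ij}=x_i/x_1$ when $[x_i,x_j]=x_8$ and $(x_i,x_j)=x_1$, and $U_{ij}=0$ otherwise, whereas you first prove what the paper silently encodes in its Hasse diagram --- that the hypotheses force $S$ to be the cube $\{dA^{\varepsilon_1}B^{\varepsilon_2}C^{\varepsilon_3}:\varepsilon_i\in\{0,1\}\}$ with $A,B,C$ pairwise coprime --- and then derive divisibility from the Kronecker factorizations $(S)=d\,(M_A\otimes M_B\otimes M_C)$ and $[S]=d\,(N_A\otimes N_B\otimes N_C)$, so that $(S)^{-1}[S]=\bigotimes_{X\in\{A,B,C\}}\bigl(\begin{smallmatrix}0&X\\1&0\end{smallmatrix}\bigr)\in M_8(\mathbb{Z})$; this step is sound since $\gcd(dm,dn)=d\gcd(m,n)$ and $\mathrm{lcm}(dm,dn)=d\,\mathrm{lcm}(m,n)$ hold even when $d$ shares prime factors with $A,B,C$, and reordering $S$ is only a permutation conjugation, which does not affect integrality. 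Your route buys an actual verifiable proof of sufficiency (the paper supplies none), recovers the paper's explicit $U$ (nonzero entries at complementary exponent vectors, with value $x_i/x_1$), and generalizes at no extra cost to $k$-fold Boolean-cube gcd-closed sets with pairwise coprime ratios, while the paper's version stays an unstated $8\times 8$ verification; the price is the extra structural lemma identifying $D_S(x_8)=\{a,b,c,d\}$, $G_S(y_1)=\{a,b\}$, $G_S(a)=\{d\}$, all of which you check correctly.
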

\begin{proof}

Under the hypothesis of the theorem we can assume that the Hasse diagram of $S$ with respect to the divisibility relation is as follows:
\begin{center}
\begin{tikzpicture}
\node (max) at (0,4) {$x_8$};
\node (a) at (-2,2) {$x_5$};
\node (b) at (0,2) {$x_6$};
\node (c) at (2,2) {$x_7$};
\node (d) at (-2,0) {$x_2$};
\node (e) at (0,0) {$x_3$};
\node (f) at (2,0) {$x_4$};
\node (min) at (0,-2) {$x_1$};
\draw (min) -- (d) -- (a) -- (max) -- (b) -- (f)
(e) -- (min) -- (f) -- (c) -- (max)
(d) -- (b);
\draw[preaction={draw=white, -,line width=6pt}] (a) -- (e) -- (c);
\end{tikzpicture}
\end{center}
If $[x_{k,i},x_{k,j}]=x_k$ for all different $x_{k,i},x_{k,j}\in G_S(x_k)$ when $|G_S(x_k)|\geq 2$, then by a direct computation, one can obtain that 
\begin{equation*}
U_{ij} = \left\lbrace \begin{array}{cl}
\frac{x_i}{x_1} & \text{if } [x_i,x_j]=x_8 \text{ and } (x_i,x_j)=x_1,\\
0 & otherwise.
\end{array}\right. 
\end{equation*}

We will show non-divisibility of the LCM matrix by the GCD matrix on $S$ in two cases. 

Case~1. Let $S$ have an element $x_k$ such that $G_S(x_k)=\left\lbrace x_{k,1},x_{k,2}\right\rbrace$ and $[x_{k,1},x_{k,2}] < x_k$. Without loss of generality, we can take $x_k = x_5$. Then, it is clear that $[x_2,x_3]<x_5$. By Lemmas~\ref{BourqueLighLemma}, \ref{FHZLEMMA}-\ref{Dnonempty}, and \ref{toplemma}, we have
\begin{eqnarray*}
U_{25} + U_{28} &=& \frac{\sum_{s=1}^{8} [x_2,x_s]c_{s5}}{\alpha_5} \\
&=& \frac{x_5-[x_2,x_3]}{\alpha_5}.
\end{eqnarray*}
By Lemma~\ref{Lemmacijalphak}, we have $\alpha_5 = x_5-x_2-x_3+x_1$. Since $[x_2,x_3]<x_5$, we have $x_5-[x_2,x_3]>0$ and 

\begin{equation*}
\alpha_5 - \left( x_5-[x_2,x_3] \right) = \left( \frac{x_3}{x_1} -1 \right)\left(x_2-x_1 \right)  >0.
\end{equation*}
Thus, $0< U_{25} + U_{28} <1$. That is $U \notin M_8(\mathbb{Z})$. 

Case~2. Let $[x_{5},x_{6}] < x_8$ without loss of generality. Now, we must have $[x_{2},x_{3}]=x_5$, $[x_{2},x_{4}]=x_6$ and $[x_{3},x_{4}]=x_7$ otherwise the proof is obvious by Case~1. Under these assumptions, we have 
\begin{equation*}
[x_5,x_6,x_7]=[x_5,x_6] < x_8. 
\end{equation*}
We will show that $U_{58} \notin \mathbb{Z}$. By Lemmas~\ref{BourqueLighLemma}, \ref{FHZLEMMA}-\ref{Dnonempty}, we have
\begin{equation*}
U_{58} = \frac{x_8-[x_5,x_6]-[x_5,x_7]+[x_4,x_5]}{\alpha_8}.
\end{equation*}
Let $\gamma_8 = x_8-[x_5,x_6]-[x_5,x_7]+[x_4,x_5]$. Since $[x_5,x_6,x_7] < x_8$ and clearly $[x_5,x_6,x_7] \mid  x_8$, we have $x_8 \geq 2\cdot [x_5,x_6,x_7]$, and hence 
 \begin{equation*}
 \gamma_8 > x_8-[x_5,x_6]-[x_5,x_7] \geq 2\cdot [x_5,x_6,x_7]- [x_5,x_6]-[x_5,x_7] \geq 0.
 \end{equation*}
By Lemma~\ref{Lemmacijalphak}, we have $\alpha_8= x_8 -x_5-x_6-x_7+x_2+x_3+x_4-x_1$. Then,
\begin{eqnarray*}
\alpha_8 - \gamma_8 &=& \left( [x_5,x_6] - x_6 -x_5 +x_2 \right)  + \left( [x_5,x_7] - x_7 -x_5 + x_3 \right) \\
& & + \left( -[x_4,x_5] - x_1 +x_5 + x_4 \right) \\
&=& \left(  \frac{x_5}{x_1} - 1 \right) \left(  x_6 - x_2 \right)   + \left(  \frac{x_5}{x_3} - 1 \right) \left(  x_7 - x_3 \right) + \left(  \frac{x_4}{x_1} - 1 \right) \left(  x_1 - x_5 \right) \\
	&=& \left(  \frac{x_4}{x_1} - 1 \right) \left(  x_5-x_2+x_1-x_3 \right) \\
	&=& \left(  \frac{x_4}{x_1} - 1 \right) \left(  \frac{x_3}{x_1} - 1 \right) (x_2-x_1) >0.
\end{eqnarray*}
This completes the proof.
\end{proof}

\begin{cor}
	Let $S$ be a gcd-closed set with $\left| S\right|\leq 8 $.
	$(S)\mid [S]$ if and only if 
\item[i)] $\max_{x\in S}\{\left| G_S(x)\right| \}=1$  or 
	
\item[ii)] $\max_{x\in S}\{\left| G_S(x)\right| \}=2$ and $S$ satisfies the condition $\mathfrak{C}$   or
	
\item[iii)] $\max_{x\in S}\{\left| G_S(x)\right| \}=3$ and  $S$ satisfies the condition $\mathfrak{M}$.  
\end{cor}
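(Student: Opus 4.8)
The plan is to split on the value of $m := \max_{x\in S}\{|G_S(x)|\}$ and to show that each of the four ranges $m=1$, $m=2$, $m=3$, and $m\geq 4$ matches exactly one side of the asserted biconditional. The cases $m\leq 2$ require no new work: by the theorem of Feng, Hong and Zhao quoted in the introduction (taken at $e=1$), a gcd-closed set with $m\leq 2$ satisfies $(S)\mid[S]$ precisely when $m=1$, or when $m=2$ and $S$ satisfies the condition $\mathfrak{C}$. These are exactly alternatives (i) and (ii), so the corollary is immediate in this range.

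Next I would dispose of $m\geq 4$. Fix a witness $x\in S$ with $|G_S(x)|=m$. Since a greatest-type divisor of $x$ cannot divide another one, $G_S(x)$ is an antichain and each element of $D_S(x)$ is a proper divisor of the greatest-type divisors involved; hence $\{x\}$, $G_S(x)$ and $D_S(x)$ are pairwise disjoint and $|S|\geq 1+|G_S(x)|+|D_S(x)|$. With $|S|\leq 8$ and $|G_S(x)|=m\geq 4$ this forces $|D_S(x)|\leq 3$, so Theorem~\ref{maintheorem} (applicable since $|G_S(x)|\geq 3$) yields $(S)\nmid[S]$. On the other hand, each of (i)--(iii) requires $m\leq 3$, so the right-hand side is false; the biconditional holds with both sides false.

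The heart of the argument is $m=3$, where I must prove $(S)\mid[S]\iff \mathfrak{M}$. Fix $x^{*}$ with $G_S(x^{*})=\{y_1,y_2,y_3\}$; then $D_S(x^{*})$ consists of the three pairwise gcds together with $(y_1,y_2,y_3)$, so $1\leq|D_S(x^{*})|\leq 4$. If $|D_S(x^{*})|\leq 3$, Theorem~\ref{maintheorem} gives $(S)\nmid[S]$, and to match the biconditional I would verify that $\mathfrak{M}$ must fail here. This rests on the key observation that $\mathfrak{M}$ at $x^{*}$ forces $|D_S(x^{*})|=4$: the three pairwise gcds are distinct because $(y_i,y_j)=y_iy_j/[y_i,y_j]=y_iy_j/x^{*}$ and the $y_i$ are distinct; and $(y_1,y_2,y_3)\neq(y_1,y_2)$ because for a prime $p$ with $\nu_p(y_3)<\nu_p(x^{*})$ (one exists since $y_3<x^{*}$), the equalities $[y_1,y_3]=[y_2,y_3]=x^{*}$ force $\nu_p(y_1)=\nu_p(y_2)=\nu_p(x^{*})$, so $\nu_p((y_1,y_2))>\nu_p(y_3)$ and $(y_1,y_2)\nmid y_3$. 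By symmetry the four gcds are distinct. Contrapositively $|D_S(x^{*})|\leq 3\Rightarrow\neg\mathfrak{M}$, so both sides of the biconditional are false.

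It remains to treat $|D_S(x^{*})|=4$. Here the four gcds are distinct; moreover $(y_1,y_2)\mid(y_1,y_3)$ would give $(y_1,y_2)\mid y_3$, hence $(y_1,y_2)\mid(y_1,y_2,y_3)$ and thus $(y_1,y_2,y_3)=(y_1,y_2)$ against distinctness, so the three pairwise gcds are pairwise incomparable while $(y_1,y_2,y_3)$ lies below all of them. This is exactly the cube-shaped Hasse diagram of Theorem~\ref{G3D4}, and the disjointness count gives $|S|=1+3+4=8$, so $S$ consists of these eight elements. Theorem~\ref{G3D4} then delivers precisely $(S)\mid[S]\iff \mathfrak{M}$, which is alternative (iii), and assembling the four ranges completes the proof. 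The main obstacle is this $m=3$ analysis: one must certify that the combinatorial condition $\mathfrak{M}$ coincides with the structural condition $|D_S(x^{*})|=4$, so that Theorem~\ref{G3D4} is invoked exactly when $\mathfrak{M}$ can hold, while Theorem~\ref{maintheorem} uniformly rules out every smaller $D_S(x^{*})$.
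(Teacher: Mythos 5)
Your proposal is correct and follows essentially the same route as the paper: Feng--Hong--Zhao's theorem for $\max_{x\in S}\{|G_S(x)|\}\leq 2$, Theorem~3.5 (the divisor-chain/$|D_S(x)|\leq 3$ non-divisibility result) for $\max\geq 4$ and for $\max=3$ with $|D_S(x)|\leq 3$, and Theorem~3.6 for the remaining case $|D_S(x)|=4$, $|S|=8$. In fact you supply details the paper leaves implicit --- the $\nu_p$-argument showing that condition $\mathfrak{M}$ forces $(y_1,y_2,y_3)$ to differ from the pairwise gcds (hence $|D_S(x^{*})|=4$ and $|S|=8$), and the incomparability check identifying the cube Hasse diagram assumed in Theorem~3.6 --- so your write-up is, if anything, more complete than the paper's.
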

\begin{proof}
If (i) or (ii) holds then by Theorems~3.4 and 4.7 in \cite{FengHongZhao2009} we know $(S)\mid [S].$
Now, let (iii) hold. Let $|S|=n$ with $n\leq 8$, $G_S(x_n)=\left\lbrace x_{n,1},x_{n,2},x_{n,3}  \right\rbrace $ and let $S$ satisfy the condition $\mathfrak{M}$. Then we claim that $|D_S(x_n)|= 4$ and $|S|=8$. Since $S$ satisfies the condition $\mathfrak{M}$, we must have   
$$
[x_{n,1},x_{n,2}]=[x_{n,1},x_{n,3}]=[x_{n,2},x_{n,3}]
$$
and hence $(x_{n,1},x_{n,2})$, $(x_{n,1},x_{n,3})$ and $(x_{n,2},x_{n,3})$ must be different elements in $S$. This means that $|D_S(x_n)|= 4$, and hence $|S|=8$. So, we must investigate the case that $|S|=8$ and $|D_S(x_8)|= 4$. Thus, by Theorem~\ref{G3D4}, we have $(S)\mid [S]$. 

Now, we prove the necessary part of the theorem by contrapositive. If $\max_{x\in S}\{\left| G_S(x)\right| \}=2$ and $S$ does not satisfy the condition $\mathfrak{C}$, then by Theorems~4.7 in \cite{FengHongZhao2009}, we know $(S)\nmid [S]$.
Consider the case that $\max_{x\in S}\{\left| G_S(x)\right| \}=3$ and  $S$ does not satisfy the condition $\mathfrak{M}$.
If  $|D_S(x)|\leq 3$ for the element $x$ with three greatest-type divisors, then we have $(S)\nmid [S]$ by Theorem~\ref{maintheorem}. If $|D_S(x)|=4$, then we have $(S)\nmid [S]$ by  Theorem~\ref{G3D4}. Since $|S|\leq 8$, $D_S(x)\leq 3$ if $\max_{x\in S}\{\left| G_{S}(x)\right| \} \geq 4$. Thus, by Theorem~\ref{maintheorem}, we have $(S) \nmid [S]$. This completes the proof of the necessary part.         
\end{proof}
Let $e\geq 1$ be an integer. All the results that we have obtained in this section are valid for the $e$th power GCD matrix and the $e$th power LCM matrix. In this paper, we have only  considered the original version of Problem~\ref{HongProblem} for the sake of brevity. 

\section{A new conjecture}
Let $k$ and $i$ be arbitrary positive integers. Consider the set 
$$
S_i=\left\lbrace p,p^2,\ldots,p^k,p^kq_1,p^kq_2,\ldots,p^kq_i,p^kq_1 q_2\ldots q_i \right\rbrace, 
$$ 
where $q_1,\ldots,q_i$ and $p$ are different prime numbers. It is clear that \newline $\max_{x\in S_i} \{\left| G_{S_i}(x)\right|\}=i$ and $\left| D_{S_i}(p^kq_1\ldots q_i)\right| =1$.
If $i\geq 3$, then, by a direct consequence of Theorem~\ref{maintheorem}, we have $(S_i)\nmid [S_i]$.
Let $k=10$ and $i=4$. We have $\max_{x\in S_4}\{\left| G_{S_4}(x)\right| \}=4$ and $|S_4|=15$. Thus, we have a gcd-closed set, not satisfying the hypothesis of Conjecture~\ref{Zhaoconj}, but the divisibility for this set cannot hold. Moreover, $S_4$ does not satisfy the condition $\mathfrak{M}$. Therefore, in the light of our results, we can say that the non-divisibility depends on not only the number $\max_{x\in S}\{\left| G_S(x)\right| \}$ but also the condition $\mathfrak{M}$. Indeed, a reason preventing the divisibility is that $S$ does not satisfy the condition $\mathfrak{M}$. 

If a set $S$ satisfies the hypothesis of Zhao's conjecture, then there must be at least three elements $x_{m,i_1}$, $x_{m,i_2}$, and $x_{m,i_3}$ such that $(x_{m,i_1},x_{m,i_2})=(x_{m,i_1},x_{m,i_3})=(x_{m,i_2},x_{m,i_3})$ where $|G_S(x_m)|=m$ and $x_{m,i_k}\in G_S(x_m)$ for $1\leq k\leq 3$. Then, we have $[x_{m,i_1},x_{m,i_2}] < x_m$. This means that if the set $S$ with $|S|=n$ holds  the hypothesis of Zhao's conjecture, then $S$ does not satisfy the condition $\mathfrak{M}.$ 

Finally, after the above observations, we conclude our paper with a new conjecture, which is a generalization of Conjecture~\ref{Zhaoconj}.

\begin{conj}
Let $S$ be a gcd-closed set with $\max_{x\in S}\{\left| G_S(x)\right| \} \geq 2$. If $S$  does not satisfy the condition $\mathfrak{M}$, then $(S)\nmid [S]$. 
\end{conj}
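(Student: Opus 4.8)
The plan is to decide membership of $U=[S](S)^{-1}$ in $M_8(\mathbb{Z})$, exactly as in Theorems~\ref{DSxnchain}--\ref{maintheorem}, this being the one case $|G_S(x)|=3$, $|D_S(x)|=4$ that those results leave open. First I would pin down the poset $(S,|)$. Since $|G_S(x_8)|=3$, write $G_S(x_8)=\{x_5,x_6,x_7\}$; since $|D_S(x_8)|=4$, the three pairwise gcds $(x_5,x_6),(x_5,x_7),(x_6,x_7)$ together with $(x_5,x_6,x_7)$ are four distinct elements, which (as $|S|=8$) must be exactly $x_1,x_2,x_3,x_4$ with $x_1=\gcd(G_S(x_8))$ the least element. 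This forces the Hasse diagram: each of $x_5,x_6,x_7$ has exactly two greatest-type divisors among $\{x_2,x_3,x_4\}$, so $G_S(x_5),G_S(x_6),G_S(x_7)$ are the three two-element subsets of $\{x_2,x_3,x_4\}$, each with gcd $x_1$. With the structure fixed, everything reduces to computing the coefficients $c_{ij}$ and the quantities $\alpha_j$ through Lemmas~\ref{BourqueLighLemma}--\ref{Dnonempty} and \ref{cijzeroLemma}.

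For sufficiency I would assume $S$ satisfies $\mathfrak{M}$, so that $[x_5,x_6]=[x_5,x_7]=[x_6,x_7]=x_8$ and $[x_2,x_3]=x_5$, $[x_2,x_4]=x_6$, $[x_3,x_4]=x_7$. Under $\mathfrak{M}$ the $c_{ij}$ collapse to clean values via Lemmas~\ref{FHZLEMMA}, \ref{Dempty}, \ref{Dnonempty} and \ref{cijzeroLemma}, and I would then compute every entry of $U$ directly from the inverse formula of Lemma~\ref{BourqueLighLemma}. The target is to show each $U_{ij}$ equals $x_i/x_1$ when $[x_i,x_j]=x_8$ and $(x_i,x_j)=x_1$, and $0$ otherwise; since $x_1\mid x_i$ for every $i$, this gives $U\in M_8(\mathbb{Z})$, hence $(S)\mid[S]$.

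For necessity I would argue the contrapositive: assuming $\mathfrak{M}$ fails, exhibit an entry of $U$ lying strictly between $0$ and $1$. I split on where the failure occurs. In Case~1 some middle element, say $x_5$, has $[x_2,x_3]<x_5$. Since $x_5$ is not maximal ($x_5\mid x_8$), a single entry is awkward, so I would invoke Lemma~\ref{toplemma} with $D_5\cup\{x_5\}=\{x_5,x_8\}$ to obtain $U_{25}+U_{28}=\sum_s[x_2,x_s]c_{s5}/\alpha_5=(x_5-[x_2,x_3])/\alpha_5$, where $\alpha_5=x_5-x_2-x_3+x_1$ by Lemma~\ref{Lemmacijalphak}, and then show $0<x_5-[x_2,x_3]<\alpha_5$. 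In Case~2, $\mathfrak{M}$ holds at $x_5,x_6,x_7$ (otherwise Case~1 applies) but $[x_5,x_6]<x_8$; now $x_8$ is maximal, so I would read off $U_{58}=(x_8-[x_5,x_6]-[x_5,x_7]+[x_4,x_5])/\alpha_8$ from Lemma~\ref{BourqueLighLemma} and again confine it to $(0,1)$.

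The hard part will be the positivity estimates $0<\gamma<\alpha$ in both necessity cases. For $\gamma>0$ I would use that the offending lcm is a proper multiple of the relevant element, so that $x_8\geq 2[x_5,x_6,x_7]$ (and similarly in Case~1), exactly along the lines of Lemma~2.9 in \cite{Zhao2014}. For $\alpha-\gamma>0$ the decisive manoeuvre is to group the difference according to the value of $(x_2,x_i)$ (respectively $(x_5,x_i)$) and rewrite each group as a product of manifestly positive factors such as $(\frac{x_3}{x_1}-1)(x_2-x_1)$, using the divisibility lattice together with the standing assumption that $\mathfrak{M}$ holds at the lower levels. Confirming that this factorization is always available and that no cross terms destroy positivity is the technical crux; the restriction $|S|\le 8$ keeps the case analysis finite and the $c_{ij}$ explicit enough to push the estimates through.
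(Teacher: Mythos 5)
There is a genuine gap here, and it is not in the estimates but in the target: the statement you are asked to prove is the paper's \emph{concluding conjecture}, which the paper itself does not prove and explicitly leaves open. It quantifies over \emph{all} gcd-closed sets $S$ with $\max_{x\in S}\{|G_S(x)|\}\geq 2$ --- arbitrary cardinality $n$, arbitrary $m=\max_{x\in S}\{|G_S(x)|\}$, arbitrary structure of $D_S(x)$, and with the failure of condition $\mathfrak{M}$ allowed at any element of $S$. Your proposal instead fixes $|S|=8$, $|G_S(x_8)|=3$, $|D_S(x_8)|=4$, pins down a single Hasse diagram, and then runs the two-case entry-trapping argument. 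That is, almost verbatim, the paper's proof of Theorem~\ref{G3D4} (together with the sufficiency direction, which is not even part of the conjecture --- the conjecture asserts only the implication ``$\mathfrak{M}$ fails $\Rightarrow (S)\nmid[S]$''). A proof of one eight-element configuration is evidence for the conjecture, not a proof of it; the paper treats it exactly that way, citing Theorems~\ref{DSxn1}--\ref{G3D4} as support before stating the conjecture.

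The place where your plan cannot be pushed to the general statement is the one you flag yourself as the ``technical crux'': you write that ``the restriction $|S|\le 8$ keeps the case analysis finite and the $c_{ij}$ explicit enough to push the estimates through.'' That restriction is not in the conjecture, and without it the method breaks down. The closed forms for $c_{ij}$ come from Lemmas~\ref{Dempty} and \ref{Dnonempty}, and the latter carries the hypothesis $|D_{i,r}\cap D_{i,t}\cap G_S(x_j)|\leq 1$; outside the regimes the paper actually handles ($D_S(x)$ a divisor chain, or $|D_S(x)|\leq 3$, or the one $|D_S(x)|=4$ configuration with $|S|=8$) the coefficients $c_{ij}$ have no known closed form, and Zhao's device of confining a single entry (or, via Lemma~\ref{toplemma}, a column-sum over $D_n\cup\{x_n\}$) of $U=[S](S)^{-1}$ to the interval $(0,1)$ has no general positivity argument behind it. Note also that for $m=2$ the conjecture follows from Feng--Hong--Zhao (since $\mathfrak{C}$ implies $\mathfrak{M}$, failure of $\mathfrak{M}$ forces failure of $\mathfrak{C}$), so the genuinely open content is $m\geq 3$ with large or badly structured $D_S(x)$ --- precisely the cases your case analysis never reaches. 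In short: what you have written is a correct reconstruction of the paper's proof of Theorem~\ref{G3D4}, but as a proof of the stated conjecture it establishes only a measure-zero special case.
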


\end{document}